\def\namedlabel#1#2{\begingroup
	\def\@currentlabel{#2}%
	\phantomsection\label{#1}\endgroup
}
\theoremstyle{plain}
\newtheorem{theorem}{Theorem}[section]
\newtheorem*{theorem2}{Theorem}
\newtheorem{cor}[theorem]{Corollary}
\newtheorem{prop}[theorem]{Proposition}
\newtheorem{lemma}[theorem]{Lemma}
\theoremstyle{definition}
\newtheorem{remark}[theorem]{Remark}
\newtheorem{fact}[theorem]{Fact}
\newtheorem{definition}[theorem]{Definition}
\newtheorem*{prope}{Property ($\divideontimes$)}
\newtheorem{hyp}{Hypothesis}
\newcommand{\aq}{\bar{a}}
\newcommand{\bq}{\bar{b}}
\newcommand{\gq}{\bar{g}}
\newcommand{\hq}{\bar{h}}
\newcommand{\xq}{\bar{x}}
\newcommand{\setN}{\mathbb{N}}
\newcommand{\lingua}{\mathcal{L}}
\newcommand{\M}{{M}}
\def\Ind#1#2{#1\setbox0=\hbox{$#1x$}\kern\wd0\hbox to 0pt{\hss$#1\mid$\hss}
	\lower.9\ht0\hbox to 0pt{\hss$#1\smile$\hss}\kern\wd0}
\def\indA{\mathop{\mathpalette\Ind{}^{\text{0}}}}
\def\indLd{\mathop{\mathpalette\Ind{}^{\text{ld}}}}
\def\indL{\mathop{\mathpalette\Ind{}^{\lingua}}}
\def\notind#1#2{#1\setbox0=\hbox{$#1x$}\kern\wd0
	\hbox to 0pt{\mathchardef\nn=12854\hss$#1\nn$\kern1.4\wd0\hss}
	\hbox to 0pt{\hss$#1\mid$\hss}\lower.9\ht0 \hbox to 0pt{\hss$#1\smile$\hss}\kern\wd0}
\DeclareMathOperator{\tp}{tp}
\DeclareMathOperator{\qftp}{qftp}
\DeclareMathOperator{\acl}{acl}
\DeclareMathOperator{\dcl}{dcl}
\DeclareMathOperator{\Stab}{Stab}
\begin{document}

	\title{Embedding stable groups into algebraic groups}
	\date{\today}
	\author{Charlotte Bartnick}
	\address{Abteilung für Mathematische Logik; Mathematisches Institut; Universität Freiburg; Ernst-Zermelo-Straße 1; 79104 Freiburg; Germany }
	\email{charlotte.bartnick@math.uni-freiburg.de}
	\keywords{definable groups, algebraic groups, separably closed fields, differentially closed fields of postivie characteristic}
	\subjclass{03C45, 12L12}
	\begin{abstract}
			Adapting a proof of Bouscaren and Delon, we show that  every type-definable connected group  in a given stable theory of fields embeds into an algebraic group, under a condition on the definable closure. We also present general hypotheses which yield a uniform description of the definable closure in such theories of fields.

	The setting includes in particular the theories of separably closed fields of arbitrary degree of imperfection and differentially closed fields of arbitrary characteristic.
	\end{abstract}

	\maketitle

	\section*{Introduction}
	Given a theory $T$, knowing which infinite groups are definable in $T$ contributes to understanding the structure of the models of $T$. Moreover, definable groups play a role in many applications of model theory. For instance, the differential Galois group of a Piccard-Vessiot extension in characteristic zero is an example of such a definable group. 
	
	If $T$ is a theory of fields, then there is another natural notion of groups coming from algebraic geometry; the notion of an algebraic group. Hrushovski \cite{HrushovskiThesis} (and van den Dries for characteristic zero) showed that these two notions agree for the theory $ACF_p$ of algebraically closed fields, answering a question of  Poizat \cite{P83question}. In fact, every definable group in an algebraically closed field is definably isomorphic to an algebraic group. The proof relies on a model theoretic version of Weil's theorem \cite{Weil} that an algebraic group can be obtained, up to bi-rational equivalence, from a pre-group, an irreducible variety with a function that generically resembles a group operation.
	
	For the theory $DCF_0$ of differentiall closed fields in characteristic zero, Pillay \cite{PillayGroupsDCF} obtained a similar result. Generalizing Weil's theorem to pro-algebraic groups, he showed that any definable group definably embeds into an algebraic group, answering a question of Kolchin \cite{Kolchin} on differentially algebraic groups. Later, he gave a purely model theoretic proof with Kowalski in \cite{KP02groups}.
	
	Ever since, many similar results haven been obtained for various theories of fields.	For example, in very recent work by Pillay, Point and Rideau-Kikuchi \cite{PPRgroups}, they study definable groups in the general setup of geometric
	fields with generic derivations. Moreover, Wang \cite{WangGroups} provided a general result on  interpretable groups in $NTP_2$ theories which applies for example to differentially closed valued fields in equicharacteristic $0$. In both results, they assume that the algebraic closure of any set $A$ in $T$ is given by the relative field theoretic algebraic closure of the (differential) field generated by $A$. However, this is not true if the underlying field is separably closed (yet not algebraically closed).
	
	In fact, little is known for definable groups in non-perfect fields in positive characteristic, even in the stable case. To our knowledge, the only existing result so far in the stable case was given by Bouscaren and Delon in \cite{BD02GroupsSCF} for the theory $SCF_{p,e}$ of separably closed fields of finite degree of imperfection. They showed that every definable group is isomorphic to the field-rational points of an algebraic group, so any type-definable group embeds into an algebraic group.

	In this note, we observe that the proof of the embedding result for type-definable groups given by Bouscaren and Delon in \cite{BD02GroupsSCF} can be generalized to other stable theories $T$ of fields with additional structure. Instead of an explicit description of the definable closure (as it is used for $SCF_{p,e}$, given by the iterated $\lambda$-functions) it is enough to assume that the following property holds in a sufficiently saturated model $\M$ of the $\lingua$-theory $T$:
	
	\begin{prope}
				Let $N$ be a small elementary substructure of $\M$ and consider definably closed sets $A=\dcl_{\lingua}(A)$ as well as $B=\dcl_{\lingua}(B)$ with $N \subset A,B$.
				If $A \indL_N B$, then the definable closure $\dcl_{\lingua}(A,B)$ is contained in the subfield $A\cdot B$ generated by $A$ and $B$.
	\end{prope}
	For generic elements of a group, this property already appeared in \cite[Proposition 3.5]{BMP19Simple}, where Blossier and Martin-Pizarro also study definable groups with respect to algebraic groups. However, 
	most of our examples do not satisfy all of their hypotheses.
	
	Inspired by the characterization of definable closure  in derivation like theories given by León Sánchez and Mohamed in \cite{LM24indep}, we describe in Section \ref{SubsectionDclProp1} a general way to verify this property.
	
		Using only Property \ref{Property1} and the fact that $T$ is a stable theory of fields, we deduce the following theorem in Section \ref{SectionEmbeddingGeneral}.
		\begin{theorem2}[Theorem \ref{TheoremMain}]
			Suppose that $T$ is a stable theory of fields such that Property \ref{Property1} holds. Every type-definable connected group $(G,\cdot)$, which is definable over a small model $N$ inside the sufficiently saturated model $\M$ of $T$, is definably isomorphic over $N$ to a subgroup of the $M$-points of an algebraic group, defined over $N$.
		\end{theorem2}
		The strategy of the proof follows \cite{BD02GroupsSCF}. We first find an isomorphism of the group to a generically rational group and then apply Weil's theorem on pre-groups \cite{Weil}. 
	
		The theorem applies in particular to the theory $SCF_{p,\infty}$ of separably closed fields of infinite degree of imperfection and to the theory $DCF_p$ of differentially closed fields of positive characteristic. These and other examples are presented in Section \ref{SectionExamples}.
		
		This note is written for model theorists who are familiar with stability theory  and the general theory of stable groups (see for example \cite{P01Groups} or \cite{W97StableGroups}) but not necessarily with the specific theories. Therefore we recall in Subsection \ref{SubsectionFieldTheory} the relevant field theoretic notions, before explaining the general setting in Subsection \ref{SubsectionSetting} and outlining a general proof of Property \ref{Property1} in Subsection \ref{SubsectionDclProp1}. In Section \ref{SectionGroupPreliminaries} we present Weil's theorem and quickly recall the relevant facts on stable groups.

	\subsection*{Acknowledgments} The author would like to thank her doctoral supervisor Amador Martin-Pizarro for all the helpful scientific discussions and for the advice on writing this note.
	
	\section{Theories of fields}\label{SectionDcl}
				\subsection{Field theoretic preliminaries}\label{SubsectionFieldTheory}
				
				We first recall some field theoretic notions and classical results that can be found for example in \cite[Chapter 2.5, 2.6 and 2.7]{FJ08fields}. We work in a sufficiently saturated algebraically closed field $M_0$ such that all fields considered are subfields of $M_0$.

				Recall that given fields $F$ and $L$, the field $F$ is said to be \textit{linearly disjoint} from $L$ over the common subfield $K$ if every tuple from $F$ that is linearly independent over $K$ remains linearly independent over $L$ (inside $M_0$). We write $F \indLd_K L$.

				\textbf{\textit{Notation:}} Given two fields $K$ and $L$, we  denote by $K \cdot L$ the (sub)field generated by $K$ and $L$ (inside $M_0$).  Moreover, if $A$ and $B$ are subsets, we denote by $K(A)$ the field extension of $K$ generated by $A$ and write $K(A,B)$ for $K(A\cup B)$.
				\begin{fact}~\label{FactPropertiesLd}
					Linear disjointedness is an  independence relation that satisfies the following properties for all subfields $K \subset F$ and $K \subset L \subset L_1$ of $M_0$: 
				\begin{enumerate}
					\item If $F \indLd_K L$, then $F \cap L = K$.
					\item Symmetry: If $F \indLd_{K} L$, then also $L \indLd_{K} F$. 
					\item Monotonicity and transitivity: $F \indLd_K L_1$ if and only if $F \indLd_K L \text{ and } F\cdot L \indLd_L L_1$.
				
					\item If $F \indLd_K L$, then $F$ and $L$ are \textit{algebraically independent} over $K$,
					that is for any finite tuple $\bar{a}$ from $F$, the transcendence degree does not decrease when passing from $K$ to $L$, i.e. $\mathrm{trdeg}(K(\bar{a})/K)=\mathrm{trdeg}(L(\bar{a})/L)$.
					
					\item If $F \indLd_K L$, then the field product $F\cdot L$ is isomorphic over $K$ to the quotient field of the tensor product $F \otimes_K L$, which happens to be an integral domain.
					
				\end{enumerate}
				\end{fact}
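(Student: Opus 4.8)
The plan is to reduce all five items to a single device, the \emph{tensor-product criterion}: for subfields (or even subrings) $F,L$ of $M_0$ containing $K$, one has $F\indLd_K L$ if and only if the multiplication map $\mu_{F,L}\colon F\otimes_K L\to M_0$, $x\otimes y\mapsto xy$, is injective. I would prove this by fixing a $K$-basis $(c_j)_j$ of $L$, so that every element of $F\otimes_K L$ is uniquely $\sum_j f_j\otimes c_j$ with $f_j\in F$; since $K$ is a field, a $K$-linearly independent tuple in $F$ extends to a $K$-basis, so $K^m\hookrightarrow F$ splits as $K$-modules and, after tensoring with $L$, $L^m\hookrightarrow F\otimes_K L$ via $(\lambda_k)_k\mapsto\sum_k g_k\otimes\lambda_k$ whenever $g_1,\dots,g_m$ are $K$-linearly independent in $F$. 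Regrouping an arbitrary element of $\ker\mu_{F,L}$ so that its $F$-coefficients become $K$-linearly independent then identifies $\ker\mu_{F,L}\neq 0$ with the existence of a nontrivial $L$-linear relation among $K$-linearly independent elements of $F$, i.e.\ with the failure of $F\indLd_K L$. Since the canonical isomorphism $F\otimes_K L\cong L\otimes_K F$ commutes with the evaluation maps into $M_0$, the criterion is manifestly symmetric; this is item~(2).

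Granting the criterion, items~(1), (4), (5) are short. For~(5): if $F\indLd_K L$ then $\mu_{F,L}$ is injective, so $F\otimes_K L$ is isomorphic over $K$ to the subring $F[L]$ of the field $M_0$, hence is an integral domain, and $F\cdot L$ --- the subfield generated --- is exactly $\mathrm{Frac}(F[L])$, so $F\cdot L\cong\mathrm{Frac}(F\otimes_K L)$ over $K$. For~(1): if $a\in(F\cap L)\setminus K$, then $(1,a)$ is a $K$-linearly independent tuple from $F$ carrying the nontrivial $L$-linear relation $a\cdot 1+(-1)\cdot a=0$, contradicting $F\indLd_K L$; thus $F\cap L=K$. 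For~(4): one always has $\mathrm{trdeg}(L(\bar{a})/L)\le\mathrm{trdeg}(K(\bar{a})/K)=:d$, so it suffices to exhibit $d$ coordinates of $\bar{a}$ that are algebraically independent over $L$; take $a_{i_1},\dots,a_{i_d}$ forming a transcendence basis of $K(\bar{a})/K$, observe that the monomials in them are $K$-linearly independent (this is precisely algebraic independence over $K$), hence $L$-linearly independent by $F\indLd_K L$, so $a_{i_1},\dots,a_{i_d}$ stay algebraically independent over $L$.

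Item~(3) needs slightly more work. The direction ``$\Leftarrow$'' is immediate: a $K$-linearly independent tuple from $F$ is $L$-linearly independent by $F\indLd_K L$ and hence $L_1$-linearly independent by $F\cdot L\indLd_L L_1$, as it also lies in $F\cdot L$. For ``$\Rightarrow$'', assume $F\indLd_K L_1$; then $F\indLd_K L$ since $L\subseteq L_1$, and for $F\cdot L\indLd_L L_1$ I would first work with the ring $F[L]$. As $F\indLd_K L$ gives $F[L]\cong F\otimes_K L$ over $K$, base change yields $F[L]\otimes_L L_1\cong(F\otimes_K L)\otimes_L L_1\cong F\otimes_K L_1$ compatibly with the maps into $M_0$; the last map is injective because $F\indLd_K L_1$, so the criterion (over the base field $L$) gives $F[L]\indLd_L L_1$. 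To pass to the fraction field $F\cdot L=\mathrm{Frac}(F[L])$, take an $L$-linearly independent tuple $(x_1,\dots,x_n)$ from $F\cdot L$, choose $z\in F[L]\setminus\{0\}$ with all $zx_i\in F[L]$, note $(zx_1,\dots,zx_n)$ is still $L$-linearly independent and therefore $L_1$-linearly independent by $F[L]\indLd_L L_1$, and clear $z$ in any $L_1$-linear relation among the $x_i$ to conclude that $(x_1,\dots,x_n)$ is $L_1$-linearly independent. The genuine obstacle here is only bookkeeping --- arranging the base-change isomorphisms in~(3) so that they visibly commute with the evaluation maps into $M_0$; once the tensor-product criterion is set up cleanly, the whole statement (together with the remaining abstract independence-relation axioms, which follow in the same spirit) is a routine assembly of the standard facts recorded in \cite[Chapters~2.5--2.7]{FJ08fields}.
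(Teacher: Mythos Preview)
Your proof is correct. The paper does not actually prove Fact~\ref{FactPropertiesLd}: it is stated there as a classical result, with the reader referred to \cite[Chapters~2.5--2.7]{FJ08fields} for details. So there is no ``paper's own proof'' to compare against beyond that citation.

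That said, your approach via the tensor-product criterion is exactly the standard one underlying the treatment in Fried--Jarden (and in Bourbaki, Lang, etc.), so in spirit you are reproducing what the cited reference does. The reduction of all five items to injectivity of $\mu_{F,L}\colon F\otimes_K L\to M_0$ is clean; your handling of item~(3) --- first establishing $F[L]\indLd_L L_1$ via the base-change isomorphism $(F\otimes_K L)\otimes_L L_1\cong F\otimes_K L_1$ and then passing to the fraction field by clearing denominators --- is the right way to organize that step, and your remark that the only genuine care needed is checking that the base-change isomorphisms commute with evaluation into $M_0$ is accurate. Nothing is missing.
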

				Linear disjointedness allows to define the notion of separability for arbitrary field extensions (generalizing the notion for algebraic field extensions): 
				If the ambient field $M_0$ has characteristic $0$, every field extension is said to be separable. Otherwise, the field $M_0$ is of positive characteristic $p$ and we denote by $M_0^p$ the subfield of $p$-powers. In this case, the extension $F/K$ is \textit{separable} if $K \indLd_{K^p} F^p$.
				
					\begin{fact}(see \cite[Fact 5]{S86Independence}) \label{FactAlgebraicIndependenceSeparability}
					If $F$ and $L$ are fields with a common subfield $K$ such that the extensions $F/K$ and $L/K$ are separable and $F$ and $L$ are algebraically independent over ${K}$, then $F\cdot L$ is a separable extension of $F$ and of $L$ (and hence also of $K$).
				\end{fact}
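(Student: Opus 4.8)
The plan is to reduce to the classical criteria for separability of field extensions. In characteristic zero there is nothing to prove, so assume $M_0$ has characteristic $p>0$. Via the Frobenius isomorphism, the condition $K\indLd_{K^p}F^p$ defining separability of $F/K$ is equivalent to MacLane's criterion that $F$ and $K^{1/p}$ be linearly disjoint over $K$; I will freely use this together with the standard facts that separability is transitive (a consequence of transitivity of linear disjointness, Fact~\ref{FactPropertiesLd}), that a subextension of a separable extension is separable, and that a \emph{finitely generated} separable extension is separably generated, i.e.\ admits a transcendence basis over which it is finite separable algebraic.

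First I would reduce to the case that $F$ and $L$ are finitely generated over $K$. If $F\cdot L$ were not separable over $F$, this would be witnessed by a finite tuple $\bar a$ from $F$ that is linearly independent over $F^p$ but linearly dependent over $(F\cdot L)^p=F^p\cdot L^p$; such a dependence involves only finitely many elements of $F$ and of $L$. Letting $F_0,L_0$ be the subfields of $M_0$ generated over $K$ by $\bar a$ together with those finitely many elements, one checks that $F_0/K$ and $L_0/K$ are still separable, that $F_0$ and $L_0$ are still algebraically independent over $K$ (immediate, since transcendence degree over a set only decreases when the set is enlarged), and that $F_0\cdot L_0$ still fails to be separable over $F_0$. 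Hence it suffices to prove the statement for $F,L$ finitely generated over $K$.

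In this case, fix a transcendence basis $T$ of $F/K$ with $F/K(T)$ finite separable, and a transcendence basis $S$ of $L/K$ with $L/K(S)$ finite separable, say $L=K(S)(\theta_1,\dots,\theta_m)$. Applying the hypothesis to the tuple $T$ shows $T$ remains algebraically independent over $L$, whence $S\cup T$ is algebraically independent over $K$; since $F\cdot L\supseteq K(S,T)$ and $\mathrm{trdeg}(F\cdot L/K)\le\mathrm{trdeg}(F/K)+\mathrm{trdeg}(L/K)=|T|+|S|$ by subadditivity, we get $\mathrm{trdeg}(F\cdot L/K)=|T|+|S|$, so $S$ is a transcendence basis of the (algebraic) extension $F\cdot L/F$. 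Thus $F(S)/F$ is purely transcendental, hence separable, while $F\cdot L=F(S)(\theta_1,\dots,\theta_m)$ is finite over $F(S)$ and separable, because each $\theta_i$ is a root of a polynomial separable over $K(S)\subseteq F(S)$ and therefore stays separable over $F(S)$. By transitivity of separability, $F\cdot L/F$ is separable. The same computation, now exhibiting $T$ as a transcendence basis of $F\cdot L/L$, shows $F\cdot L/L$ is separable, and transitivity along $K\subseteq F\subseteq F\cdot L$ then yields separability over $K$.

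The step I expect to be the real work is the reduction to the finitely generated case: for infinitely generated extensions ``separable'' is strictly weaker than ``separably generated'', so the transcendence bases $T$ and $S$ with a finite separable algebraic top are not available directly, and one must check carefully that only finitely many elements of $F$ and $L$ enter a putative inseparability witness and that all three hypotheses genuinely descend to $F_0$ and $L_0$. Once one is in the finitely generated case, the remaining bookkeeping with transcendence bases and composita of finite separable extensions is routine.
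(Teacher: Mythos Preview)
The paper does not give a proof of this statement: it is recorded as a \emph{Fact} with a citation to Srour \cite{S86Independence}, so there is no argument in the paper to compare yours against.

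Your proof is correct. The reduction to finitely generated $F$ and $L$ is sound: a failure of $F\indLd_{F^p}(F\cdot L)^p$ is witnessed by a finite $F^p$-independent tuple $\bar a\subset F$ becoming dependent over $(F\cdot L)^p=F^p\cdot L^p$, and both the tuple and the coefficients of the dependence involve only finitely many elements of $F$ and $L$; the descended extensions $F_0/K$, $L_0/K$ are separable because subextensions of separable extensions are separable (monotonicity of linear disjointness), and the independence over $F_0^p$ persists since $F_0^p\subseteq F^p$. In the finitely generated case, the use of separating transcendence bases $T$ and $S$, the verification that $S$ is a transcendence basis of $F\cdot L$ over $F$, and the decomposition $F\subseteq F(S)\subseteq F(S)(\theta_1,\dots,\theta_m)=F\cdot L$ into a purely transcendental step followed by a finite separable step are all standard and correctly carried out. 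Your own caveat is well placed: the only genuinely non-routine part is making the reduction to finite generation precise.
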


				Instead of working with just separable field extensions, it can be useful to consider regular extensions.  By definition, the field extension $F/K$ is \textit{regular} if $F \indLd_K K^{alg}$, where $K^{alg}$ denotes the field theoretic algebraic closure.
				
				\begin{fact} ~\label{FactRegularCharacterization}
					\begin{enumerate}
						\item The extension $F/K$ is regular if and only if it is a separable extension and $K$ is relatively algebraically closed in $F$ (i.e. $F\cap K^{alg}=K$).
						\item Over regular extensions, algebraic independence and linear disjointedness are equivalent: If $F/K$ is a regular extension of fields and $L \supset K$ such that $F$ and $L$ are algebraically independent over $K$, then $F \indLd_K L$.
					\end{enumerate}
				\end{fact}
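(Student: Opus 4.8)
The plan is to prove the two clauses separately, making use of Fact \ref{FactPropertiesLd}, Fact \ref{FactAlgebraicIndependenceSeparability}, and the standard characterisation that $F \indLd_K L$ is equivalent to injectivity of the multiplication map $F\otimes_K L\to F\cdot L$. For the forward direction of clause~(1) I would start from $F \indLd_K K^{alg}$: then $F\cap K^{alg}=K$ by Fact \ref{FactPropertiesLd}(1), so $K$ is relatively algebraically closed in $F$, and separability is automatic in characteristic $0$. In characteristic $p>0$, since $K^{1/p}\subseteq K^{alg}$ monotonicity gives $F \indLd_K K^{1/p}$, hence $K^{1/p} \indLd_K F$ by symmetry; I would then push this through the Frobenius isomorphism $x\mapsto x^p$ (a field isomorphism of $M_0$ onto $M_0^p$ sending $K^{1/p}\mapsto K$, $K\mapsto K^p$, $F\mapsto F^p$ and preserving linear disjointness) to obtain $K \indLd_{K^p} F^p$, which is exactly separability of $F/K$.

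For the converse direction of clause~(1) I would assume $F/K$ separable with $F\cap K^{alg}=K$ and interpolate the separable-algebraic closure $K_s$ of $K$, so that $K\subseteq K_s\subseteq K^{alg}$ with $K_s/K$ Galois and $K^{alg}\subseteq K_s^{1/p^{\infty}}$. By transitivity it then suffices to establish $F \indLd_K K_s$ and $F\cdot K_s \indLd_{K_s} K^{alg}$. For the first I would use Galois theory on the finite pieces: if $E/K$ is finite Galois inside $K_s$, then $E\cap F=K$ forces $\mathrm{Gal}(EF/F)\cong\mathrm{Gal}(E/K)$ by restriction, so $[EF:F]=[E:K]$ and $F \indLd_K E$; taking the union over all such $E$ yields $F \indLd_K K_s$. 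For the second, $F/K$ and $K_s/K$ are both separable and (as $K_s/K$ is algebraic) algebraically independent over $K$, so Fact \ref{FactAlgebraicIndependenceSeparability} makes $F\cdot K_s/K_s$ separable; since a separable extension of a field is linearly disjoint over that field from its perfect closure (the iterated form of the Frobenius argument above), $F\cdot K_s \indLd_{K_s} K_s^{1/p^{\infty}}$, and monotonicity delivers $F\cdot K_s \indLd_{K_s} K^{alg}$.

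For clause~(2) I would first reduce — since linear disjointness only sees finite tuples from $F$ and the coefficients of any witnessing $L$-linear relation generate a finitely generated subfield of $L$ — to the case where $F=K(\bar a)$ and $L=K(\bar b)$ are finitely generated over $K$, noting that $F/K$ stays regular by clause~(1). The hard part, which I would simply import from \cite{FJ08fields}, is the classical fact that regularity of $F/K$ makes $F\otimes_K L'$ an integral domain for every extension $L'/K$ — this ultimately rests on $F\otimes_K K^{alg}$ being a domain (Fact \ref{FactPropertiesLd}(5)), the flat embedding $F\otimes_K L\hookrightarrow F\otimes_K L^{alg}$, and a reduction to algebraically closed $L'$. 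Granting it, $F\otimes_K L$ is a Noetherian domain (a localisation of the finitely generated $L$-algebra $L[\bar a]$), so the multiplication map $\mu\colon F\otimes_K L\to F\cdot L$ has prime kernel $\mathfrak{p}$, and a dimension count gives
\[ \mathrm{ht}\,\mathfrak{p} \;=\; \dim(F\otimes_K L) - \mathrm{trdeg}(F\cdot L/K) \;=\; 0, \]
since $\dim(F\otimes_K L)=\mathrm{trdeg}(F/K)+\mathrm{trdeg}(L/K)$ and this equals $\mathrm{trdeg}(F\cdot L/K)$ precisely because $F$ and $L$ are algebraically independent over $K$. Hence $\mathfrak{p}$ is a minimal prime of the domain $F\otimes_K L$, so $\mathfrak{p}=0$, the map $\mu$ is injective, and $F \indLd_K L$, as desired. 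The step I expect to be the real obstacle is the tensor-product characterisation of regularity invoked above; everything else is bookkeeping with Fact \ref{FactPropertiesLd}, basic Galois theory, and dimension theory of finitely generated algebras.
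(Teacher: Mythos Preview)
The paper does not prove this statement; it is recorded as a classical fact with an implicit reference to \cite[Chapters~2.5--2.7]{FJ08fields}, so there is no argument in the paper to compare your proposal against. Your proof is essentially correct.

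The one place that deserves tightening is the dimension count in clause~(2). You invoke $\mathrm{ht}\,\mathfrak p=\dim(F\otimes_K L)-\mathrm{trdeg}(F\cdot L/K)$ directly for $F\otimes_K L$, but this ring is only a \emph{localisation} of a finitely generated algebra over a field, and for such localisations the additive dimension formula need not hold as written. The repair is routine: contract $\mathfrak p$ to the finitely generated $K$-algebra $A=K[\bar a]\otimes_K K[\bar b]$, which embeds into $F\otimes_K L$ by flatness and hence is a domain with $\mathrm{Frac}(A)=\mathrm{Frac}(F\otimes_K L)$. Since $(F\otimes_K L)/\mathfrak p$ is a localisation of $A/(\mathfrak p\cap A)$, both $\mathrm{Frac}(A)$ and $\mathrm{Frac}(A/(\mathfrak p\cap A))=F\cdot L$ have transcendence degree $\mathrm{trdeg}(F/K)+\mathrm{trdeg}(L/K)$ over $K$; now the dimension formula for finitely generated domains over a field gives $\mathrm{ht}(\mathfrak p\cap A)=0$, so $\mathfrak p\cap A=0$ and thus $\mathfrak p=0$. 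Alternatively, one can avoid dimension theory altogether: split off a transcendence basis $b_1,\dots,b_e$ of $L/K$, get $F\indLd_K K(b_1,\dots,b_e)$ directly from algebraic independence, observe that purely transcendental base change preserves regularity so $F(b_1,\dots,b_e)/K(b_1,\dots,b_e)$ is regular, and conclude $F(b_1,\dots,b_e)\indLd_{K(b_1,\dots,b_e)} L$ from the definition of regularity since $L/K(b_1,\dots,b_e)$ is algebraic.
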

				
				For the last part of this subsection, we assume that $M_0$ is of positive characteristic $p$. In that case, separability of field extensions can also be described using $p$-bases:

			\begin{definition}\label{DefPbasis}
				Given a field extension $F/K$, the degree $[F:{K\cdot F^p}]$ of the extension $F/(K\cdot F^p)$ is either infinite or of the form $p^e$ for some natural number $e$. The number $e$ respectively $\infty$ is called the \textit{degree of imperfection of $F$ over $K$}. 		
			
				Let $A$ be a subset of $F$. The \emph{$p$-monomials} in $A$ are all monomials in $A$ such that each element has exponent strictly less than $p$. Given $F$ and $K$ as above, the subset $A$ of $F$ is called \textit{$p$-independent in $F$ over $K$} if the $p$-monomials in $A$  are linearly independent over $K\cdot F^p$ (in the $(K\cdot F^p)$-vector space $F$).
				The subset $A$ is a \textit{$p$-basis of $F$ over $K$} if the $p$-monomials  form a vector space basis of $F/(K\cdot F^p)$, that is $A$ is $p$-independent over $K$ and the $p$-monomials generate $F$ over $K\cdot F^p$. 
				\end{definition}
				
				In the case $K=F^p$ we omit the base field $K$ and speak of a $p$-basis of $F$.
				\begin{fact}\label{FactImpDeg}
									
						In the setting of Definition \ref{DefPbasis}, a $p$-basis of $F$ (over $K$) always exists and the degree of imperfection of $F$ (over $K$) coincides with the size of any $p$-basis of $F$ (over $K$).
				\end{fact}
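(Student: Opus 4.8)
The plan is to reduce everything to the single tower $L \subseteq F$ with $L := K \cdot F^p$, a subfield of $F$ satisfying $F^p \subseteq L$; thus $x^p \in L$ for all $x \in F$, i.e.\ $F/L$ is purely inseparable of exponent at most one, and this is the only structure I will use. The cornerstone is the elementary computation that for any intermediate field $L \subseteq L' \subseteq F$ and any $b \in F \setminus L'$, the polynomial $X^p - b^p$ is the minimal polynomial of $b$ over $L'$: a proper monic factor would have the shape $(X-b)^k$ with $0 < k < p$, whose constant term $\pm b^k \in L'$ together with $b^p \in L'$ and $\gcd(k,p)=1$ would force $b \in L'$. Hence $[L'(b):L']=p$ and $\{1, b, \dots, b^{p-1}\}$ is an $L'$-basis of $L'(b)$.

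From this I would first extract, by induction on $n$, the dictionary: for distinct $a_1, \dots, a_n \in F$, the set $\{a_1, \dots, a_n\}$ is $p$-independent over $K$ in the sense of Definition~\ref{DefPbasis} if and only if $a_{i+1} \notin L(a_1, \dots, a_i)$ for every $i$, if and only if $[L(a_1, \dots, a_n):L] = p^n$. The same induction records the auxiliary point that the $L$-linear span $V_A$ of the $p$-monomials in an arbitrary set $A$ is closed under multiplication — in a product of two $p$-monomials each exponent can be reduced modulo $p$ using $a^p \in L$ — hence is a subring of $F$ and a directed union of finite-dimensional $L$-subalgebras; each such subalgebra is an integral domain, so a field, and therefore $V_A = L(A)$.

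For existence I note that $p$-independence over $K$ has finite character, so Zorn's lemma gives a maximal $p$-independent set $A \subseteq F$, and I would show such an $A$ is automatically a $p$-basis, i.e.\ $F = V_A = L(A)$: if some $b \in F \setminus L(A)$ existed, then using that $\{1, b, \dots, b^{p-1}\}$ is an $L(A)$-basis of $L(A)(b)$ one checks that the $p$-monomials in $A \cup \{b\}$ — precisely the products $m\,b^{j}$ with $m$ a $p$-monomial in $A$ and $0 \le j < p$ — are $L$-linearly independent, contradicting maximality. Finally, for the size claim: a finite $p$-basis $A$ yields $[F:L] = p^{|A|}$ by counting its $p$-monomials, while an infinite $p$-basis forces $[F:L]$ infinite (arbitrarily large finite pieces of degree $p^k$); hence $[F:L]$ is finite iff the $p$-bases are finite, in which case they all have the same size $e$ with $p^e = [F:L]$, the degree of imperfection, and otherwise they are all infinite — matching the convention $e = \infty$ — of common cardinality $\dim_L F$, since in each case the $p$-monomials form an $L$-basis of $F$.

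I expect the replacement step — that a maximal $p$-independent set already spans — to be the main obstacle. Its engine is the identification $V_A = L(A)$, which turns the span of the $p$-monomials into an honest subfield and lets me invoke the degree-$p$ computation; granting that, both the extension argument and the reduction of the infinite case (via finite character) to finite subtuples are routine linear algebra.
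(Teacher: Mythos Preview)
Your proof is correct, and your identification of the step $V_A = L(A)$ as the crux is apt: once the $L$-span of the $p$-monomials is known to be a field, both the dictionary between $p$-independence and the tower of degree-$p$ extensions and the maximality-implies-spanning argument go through cleanly, exactly as you outline.

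Note, however, that the paper does not prove this statement at all --- it is recorded as a Fact and deferred to standard references (the opening of Subsection~\ref{SubsectionFieldTheory} points to \cite[Chapters 2.5--2.7]{FJ08fields}). Your argument is the classical textbook one: reduce to the purely inseparable extension $F/(K\cdot F^p)$ of exponent at most one, use the degree-$p$ minimal-polynomial computation to characterize $p$-independence via the tower formula, and apply Zorn for existence. There is thus no paper proof to compare against; you have simply supplied a complete and correct proof where the paper gives none.
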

				An extension of fields $L/F$ is separable if and only if every (or equivalently some) $p$-basis of $F$ remains $p$-independent in $L$.			This observation can be used to define functions that guarantee the separability of field extensions:
				\begin{definition}\label{DefLambdaFunctions}

					The \textit{generalized $\lambda$-functions} for a field $L$  of arbitrary degree of imperfection can be defined as follows: For each natural number $n$ fix an enumeration $(m_i(\xq))_{i<p^n}$ of the $p$-monomials in $n$ variables and define for each $i< p^n$ and every tuple $\aq$ of length $n$ and element $b$ from $L$ the function
					\[\lambda_{i,n}(\aq,b)\coloneqq \begin{cases}
						0 \quad \quad  \text{ if $\aq$ is not $p$-independent}\\
						k_i \quad \quad \text{if } b=\sum_{j=0}^{p^n-1} k_j^p \cdot m_j(\aq)\\
						0 \quad \quad  \text{ if $\aq$ and $b$ are $p$-independent}
					\end{cases},\]
					which gives the $p$-th root of the coefficient from $L^p$ in the unique basis representation of $b$ with respect to the $p$-monomials in $\aq$ if such a representation exists.
				\end{definition}
				\begin{fact} \label{FactGeneralizedLambdaFunctions}
					By definition, the extension $L/F$ is separable if and only if $F$ is closed under the generalized $\lambda$-functions in $L$.
				\end{fact}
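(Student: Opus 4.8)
The plan is to tie closure under the generalized $\lambda$-functions to the characterization of separability recalled just before Definition~\ref{DefLambdaFunctions}: the extension $L/F$ is separable if and only if some (equivalently every) $p$-basis of $F$ stays $p$-independent in $L$ over $L^p$. First I would unpack the case distinction defining $\lambda_{i,n}$. For a tuple $\aq$ from $F$ of length $n$ and an element $b\in F$, the value $\lambda_{i,n}(\aq,b)$ is $0$, hence lies in $F$, unless $\aq$ is $p$-independent in $L$ and $b$ belongs to the $L^p$-span of the $p$-monomials $m_0(\aq),\dots,m_{p^n-1}(\aq)$; in that last case $b=\sum_j k_j^p\,m_j(\aq)$ for unique $k_j\in L$ and $\lambda_{i,n}(\aq,b)=k_i$. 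So ``$F$ closed under the generalized $\lambda$-functions in $L$'' means precisely that, for every such $\aq$ and $b$, all of these $p$-th roots $k_j$ already lie in $F$.

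For the implication ``$L/F$ separable $\Rightarrow$ $F$ closed under the $\lambda$-functions'', I would take $\aq$ and $b$ as above with $\aq$ $p$-independent in $L$ and $b$ in the $L^p$-span of the $p$-monomials in $\aq$. Since $F^p\subseteq L^p$, the tuple $\aq$ is then $p$-independent in $F$ over $F^p$, so it extends to a $p$-basis $B$ of $F$, and by separability $B$ stays $p$-independent in $L$. Writing $b\in F$ over $F^p$ in the $p$-monomials of $B$ gives $b=\sum_m c_m^p\,m$ with $c_m\in F$; this is simultaneously an expansion over $L^p$ in the $p$-monomials of $B$, which are $L$-linearly independent, so comparing with $b=\sum_j k_j^p\,m_j(\aq)$ (the $p$-monomials in $\aq$ being among those in $B$) and using uniqueness of such expansions forces $k_j^p=c_{m_j(\aq)}^p$, hence $k_j=c_{m_j(\aq)}\in F$ as the Frobenius is injective.

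For the converse, assuming $F$ closed under the generalized $\lambda$-functions in $L$, I would fix a $p$-basis $B$ of $F$ and suppose for contradiction that $B$ is not $p$-independent in $L$. Choosing a $p$-dependent (in $L$) finite subset $\aq\subseteq B$ of least size, picking $a\in\aq$ and putting $\aq'\coloneqq\aq\setminus\{a\}$, minimality makes $\aq'$ $p$-independent in $L$; then from $[L^p(\aq):L^p]<p^{\lvert\aq\rvert}=p\cdot[L^p(\aq'):L^p]$ one gets $[L^p(\aq')(a):L^p(\aq')]=1$, i.e.\ $a\in L^p(\aq')$, so $a=\sum_j k_j^p\,m_j(\aq')$ with $k_j\in L$. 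Applying the relevant $\lambda$-function to $(\aq',a)$ — both tuples from $F$ — and using closure gives $k_j\in F$ for all $j$, whence $a\in F^p(\aq')=F^p(\aq\setminus\{a\})$; this contradicts $p$-independence of $\aq$ over $F^p$, which holds because $\aq$ is a subset of the $p$-basis $B$. Hence every $p$-basis of $F$ stays $p$-independent in $L$ and $L/F$ is separable.

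I would not expect a real obstacle: as the ``by definition'' in the statement suggests, this is essentially a rearrangement of the definitions, and the only point requiring care is the bookkeeping with $p$-monomials — the uniqueness of the $L^p$-expansion of an element relative to a $p$-basis, and the short degree computation showing that a fresh $p$-dependence already puts the extra element into the $L^p$-span of the remaining ones.
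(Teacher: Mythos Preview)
Your argument is correct. The paper does not supply a proof of this fact at all: it records it as an immediate consequence of the definitions (hence the ``by definition'' in the statement) and moves on, so there is nothing to compare against beyond noting that you have spelled out exactly the verification via the $p$-basis characterization of separability that the phrasing invites. The only spot worth flagging is the edge case $|\aq|=1$ in your converse direction, where $\aq'=\emptyset$; this is fine provided one reads the definition of $\lambda_{i,n}$ as including $n=0$ (the single $p$-monomial in zero variables being $1$), which is the natural reading and recovers the ordinary $p$-th root.
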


		\subsection{The Setting} \label{SubsectionSetting}
		We can now describe our setting: We fix a characteristic $p$ which can be prime or $0$, and  denote by $T_0$ the theory $ACF_p$ of algebraically closed fields of characteristic $p$, considered in the language $\lingua_0=\{0,1,+,-.\cdot,^{-1}\}$ of fields. Note that this choice of $\lingua_0$ yields that substructures of a model of $T_0$ are always subfields. Recall that the theory $ACF_p$ is strongly minimal and eliminates quantifiers and imaginaries. Moreover, non-forking independence in $T_0$ coincides with algebraic independence and is denoted $\indA$. 
		
		Assume that we are given a language $\lingua \supset \lingua_0$.		
	\textbf{For the rest of this note, we consider a fixed complete  $\lingua$-theory $T$ of fields of the same characteristic $p$.}
		
  Every field embeds into an algebraically closed field, so fix a sufficiently saturated model $\M$ of $T$ as well as a sufficiently saturated algebraically closed field $\M_0$ with $M\subset \M_0$. 
	All tuples and subsets will be chosen inside the model $\M$ of $T$, but we will also view them as elements of $\M_0$. Their properties with respect to the theory $T$ will be denoted with the index $\lingua$ and with respect to the theory $T_0$ of algebraically closed fields with the index $0$. For example, the algebraic closure of $A$ in $T$ (in the model theoretic sense) will be written as $\acl_{\lingua}(A)$ and the algebraic closure in $ACF_p$ (i.e. the field theoretic algebraic closure) as $\acl_0(A)$. If $A=K$ is a subfield, we sometimes also write $K^{alg}$ for $\acl_0(K)$. 
	Using that $T$ is a theory of fields, a few general observations follow immediately.

	\begin{remark} \label{RemarkSettingFields}
		Consider a subset $A$ of $M$ as well as tuples $\bar{b}$ and $\bar{b}'$.
		\begin{enumerate}
				\item  If $\qftp_{\lingua}(\bq/A)=\qftp_\lingua(\bq'/A)$, then $\tp_0(\bq/A)=\tp_0(\bq'/A)$. \label{ItemTypLImpliziertTyp0}
			\label{ItemZusammenhangDerUS}
			\item We always have $\dcl_0(A)\cap \M \subset \dcl_{\lingua}(A)$. In particular, if $A$ is $\lingua$-definably closed, then  $\dcl_0(A) \cap\M =A$. \label{ItemZusammenhangDerDcls} 
			\item \label{ItemZusammenhangDerAcls} Similarly, we have $\acl_0(A)\cap \M \subset \acl_{\lingua}(A)$, so   $\acl_0(A) \cap \M =A$ if $A$ is $\lingua$-algebraically closed.  
			
			\item The $\lingua$-substructure generated by $A$, denoted by $ \langle A \rangle_{\lingua}$,  is a subfield of $\M$ \label{ItemSubfield}.
			\item The extension $M/\dcl_\lingua(A)$ is a separable extension of fields. \label{ItemSeparable}
			\item The extension $M/\acl_{\lingua}(A)$ is a regular extension of fields. (see \cite[(1.17)]{C97independence})  \label{ItemRegular}
			\item For tuples $\bq$ from $M$, the type $\tp_0(\bq/\acl_{\lingua}(A))$ is stationary. In particular, $\lingua_0$-types of tuples from $M$ over small submodels of $T$ are stationary.\label{ItemStationary}
		\end{enumerate}
	\end{remark}
	\begin{proof}~\\
		(\ref{ItemTypLImpliziertTyp0}) - (\ref{ItemZusammenhangDerAcls})
		follow in a general context from the fact that models of $T$ embed into models of the theory $T_0$ which has quantifier elimination, see \cite[Remark 1.2]{Bartnick}.\\
		(\ref{ItemSubfield}) is trivial since $\lingua_0 \subset \lingua$. \\
		(\ref{ItemSeparable}) is immediate from the fact that the generalized $\lambda$-functions are definable in any field. 
		\\
		(\ref{ItemRegular}) follows from (\ref{ItemSeparable}) and (\ref{ItemZusammenhangDerAcls}) as an extension of fields is regular if it is separable and relatively algebraically closed (see Fact \ref{FactRegularCharacterization}).\\
		(\ref{ItemStationary}): 
		Assume $A=\acl_{\lingua}(A)$ for the simplicity of  notation. By (\ref{ItemRegular}) we have $\bq' \indLd_A \acl_0(A)$ for any realization $\bq'$ of $\tp_0(\bq/A)$. Hence $\tp_0(\bq/A) \vdash \tp_0(\bq/\acl_0(A))$ and the latter is stationary.	
	\end{proof}
	We will also make frequent use of the following result by 
 Chatzidakis on non-forking independence in $T$, denoted $\indL$, which was originally formulated for $\lingua$-algebraically closed sets $A$ and $B$ in \cite{C97independence}. An inspection of the proof yields that it holds in the following more general form:
	
	\begin{fact}(\cite[Theorem 3.5]{C97independence})~ \label{FactResultsChatzidakis}
		Let $N \prec M$ be a small elementary substructure and suppose that we are given subfields $A$ and $B$ containing $N$ with $M/A$ as well as $M/B$ separable such that $A \indL_N B$. Then:
		\begin{enumerate}
				\item We have that $A \indLd_N B$, or equivalently, by Remark \ref{RemarkSettingFields} (\ref{ItemRegular}), that  $A \indA_N B$. \label{ItemLinearDisjoint}
			\item The extension $M/A\cdot B$ is separable. \label{ItemAclSeparable}
		\end{enumerate}
		In particular, by Remark \ref{RemarkSettingFields} (\ref{ItemSeparable}),  the above holds whenever $A=\dcl_{\lingua}(\aq,N)$ and $B=\dcl_{\lingua}(\bq,N)$ for tuples $\aq$ and $\bq$ in $M$ with $\aq \indL_{N} \bq$.
	\end{fact}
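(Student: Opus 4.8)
The statement is \cite[Theorem 3.5]{C97independence}, there stated for $\lingua$-algebraically closed $A$ and $B$; as the excerpt indicates, its proof only uses that hypothesis through the separability of $M/A$ and $M/B$ (which, for $\lingua$-algebraically closed sets, is Remark \ref{RemarkSettingFields}(\ref{ItemRegular})), so the plan is to rerun that argument verbatim in the stated generality. The ``in particular'' clause is then immediate: if $A=\dcl_{\lingua}(\aq,N)$ and $B=\dcl_{\lingua}(\bq,N)$, then $N\subseteq A,B$, both $A$ and $B$ are subfields of $M$ (Remark \ref{RemarkSettingFields}(\ref{ItemSubfield})), the extensions $M/A$ and $M/B$ are separable (Remark \ref{RemarkSettingFields}(\ref{ItemSeparable})), and $A\indL_{N}B$ because forking independence is insensitive to replacing a parameter set by its definable closure over $N$.

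For (1), by finite character it suffices to show $\bar a\indA_{N}\bar b$ for all finite $\bar a\in A$, $\bar b\in B$. First I would replace $\bar a$ by a transcendence basis of $N(\bar a)/N$ sitting inside $\{a_{1},\dots,a_{n}\}\subseteq A$ — this preserves $\indL_{N}\bar b$ by monotonicity and leaves the relative transcendence degree over $N(\bar b)$ unchanged — so that $\bar a$ may be assumed algebraically independent over $N$. If $\bar a$ became algebraically dependent over $N(\bar b)$, then after reordering some $a_{j}$ lies in $\acl_{0}(N,\bar b,a_{1},\dots,a_{j-1})$ but not in $\acl_{0}(N,a_{1},\dots,a_{j-1})$. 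Put $D=N\cup\{a_{1},\dots,a_{j-1}\}$. Then $a_{j}\in\acl_{\lingua}(D,\bar b)$ by Remark \ref{RemarkSettingFields}(\ref{ItemZusammenhangDerAcls}), and $a_{j}\indL_{D}\bar b$ by monotonicity and left transitivity of $\indL$ applied to $\bar a\indL_{N}\bar b$; the basic fact that forking independence implies algebraic independence then forces $a_{j}\in\acl_{\lingua}(D)$, which — using that over a set containing the model $N$ the $\lingua$-algebraic closure is contained in the field-theoretic algebraic closure, the point where the argument of \cite[Theorem 3.5]{C97independence} does its work — contradicts $a_{j}\notin\acl_{0}(D)$. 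Hence $A\indA_{N}B$; since $M/N$ is regular (Remark \ref{RemarkSettingFields}(\ref{ItemRegular}) applied to $N$) so is the sub-extension $A/N$ (separability passes down by monotonicity of $\indLd$, and $A\cap N^{alg}\subseteq M\cap N^{alg}=N$), so Fact \ref{FactRegularCharacterization}(2) upgrades this to $A\indLd_{N}B$.

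For (2): by (1) the fields $A$ and $B$ are algebraically independent over $N$, and both $A/N$ and $B/N$ are separable (sub-extensions of $M/A$ and $M/B$, by monotonicity of $\indLd$ on $N\indLd_{N^{p}}M^{p}$), so Fact \ref{FactAlgebraicIndependenceSeparability} gives that $A\cdot B$ is separable over $A$; combining $A\indLd_{A^{p}}M^{p}$, $B\indLd_{B^{p}}M^{p}$ and $A\indLd_{N}B$ through the monotonicity, base monotonicity and transitivity clauses of Fact \ref{FactPropertiesLd}, and using $(A\cdot B)^{p}=A^{p}\cdot B^{p}$, one obtains $A\cdot B\indLd_{(A\cdot B)^{p}}M^{p}$, i.e.\ $M/A\cdot B$ is separable. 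I expect the main obstacle to be the extraction step in (1): turning a drop in transcendence degree into a genuine new $\lingua$-algebraic element over a set containing $N$, and controlling its $\lingua$-algebraic closure, is where the separability hypothesis is genuinely used; the linear-disjointness bookkeeping in (2) is routine though fiddly.
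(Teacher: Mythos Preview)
The paper itself gives no proof of this Fact; it simply cites Chatzidakis and asserts that ``an inspection of the proof'' shows the weaker separability hypothesis suffices. So your proposal is an attempted reconstruction, and it contains a genuine gap in part~(1).

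The problematic step is the one you yourself flag: after deducing $a_{j}\in\acl_{\lingua}(D)$ with $D=N\cup\{a_{1},\dots,a_{j-1}\}$, you invoke ``over a set containing the model $N$ the $\lingua$-algebraic closure is contained in the field-theoretic algebraic closure'' to get $a_{j}\in\acl_{0}(D)$. That containment is \emph{false} in the generality of the paper. Take $T=DCF_{0}$, let $N$ be a model and $a$ a differential transcendental over $N$: then $Da\in\acl_{\lingua}(N,a)$ (indeed $Da\in\dcl_{\lingua}(N,a)$) while $Da\notin\acl_{0}(N,a)=N(a)^{alg}$. Remark~\ref{RemarkSettingFields}(\ref{ItemZusammenhangDerAcls}) gives only the opposite inclusion $\acl_{0}(D)\cap M\subseteq\acl_{\lingua}(D)$. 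The one base set for which your inclusion does hold is $D=N$ itself (since $\acl_{\lingua}(N)=N$ for a model), but your inductive reduction forces you off the model immediately.

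A route that actually works, and is presumably what Chatzidakis's proof does, avoids $\acl_{\lingua}$ altogether and uses that over a model non-forking extensions are coheirs: if some finite $\bar a\subseteq A$ were algebraically independent over $N$ but satisfied a nontrivial polynomial relation $P(\bar a,\bar b)=0$ with $\bar b\subseteq B$, then the $\lingua_{0}$-formula ``$P(\bar a,y)=0$ and $P(\bar a,y)$ is a nonzero polynomial in $\bar a$'' lies in $\tp_{\lingua}(\bar b/NA)$, which by $B\indL_{N}A$ is finitely satisfiable in $N$; a witness $\bar b'\in N$ then contradicts algebraic independence of $\bar a$ over $N$. This uses that $N$ is a model via the coheir property, not via any comparison of $\acl_{\lingua}$ and $\acl_{0}$. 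Your argument for~(2) is also only a sketch (``routine though fiddly''), and the combination of linear-disjointness statements you list does not obviously assemble into $A\cdot B\indLd_{(A\cdot B)^{p}}M^{p}$; but the decisive error is in~(1).
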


	\subsection{Definable closure in theories of fields} \label{SubsectionDclProp1}
	In order to characterize definable groups in models of $T$ in Section \ref{SectionEmbeddingGeneral}, we need to understand the definable closure in more detail.  
	
	\begin{prope}\namedlabel{Property1}{($\divideontimes$)}
		Consider a small elementary substructure $N$ of $\M$ and let  $A=\dcl_{\lingua}(A,N)$ as well as $B=\dcl_{\lingua}(B,N)$ be definably closed sets.
		If $A \indL_N B$, then
		\[\dcl_{\lingua}(A,B) \subset A\cdot B .\]
	\end{prope}
	
	Note that the field  $A \cdot B$ is always contained in $ \dcl_{\lingua}(A,B)$, so Property \ref{Property1} implies that equality holds.

		Property \ref{Property1} holds in all our examples. Therefore, we outline in this subsection a general way to characterize the definable closure that will enable us to deduce Property \ref{Property1}. We start by assuming a characterization of the model theoretic algebraic closure for certain sets.
		
		\begin{definition}[{\cite[Definition 1.6]{Bartnick}}]\label{DefClassK}
			A class $\mathcal{K}$ of $\lingua$-substructures of $\M$ is called \textit{strong} if the following property holds: Every $\lingua$-isomorphism $f : A \to  A'$ with both $A$ and	$A'$ in $\mathcal{K}$ is elementary (i.e. $\tp_{\lingua}(A)=\tp_{\lingua}(A')$). The elements of $\mathcal{K}$ are called strong substructures.
		\end{definition}
	For example, if $T$ has quantifier elimination, then the class of all substructures is strong.
	
	 \textbf{For the remainder of this subsection, we fix a class $\mathcal{K}$ of strong substructures of $\M$.}
	
		We will now introduce conditions which will ensure that for strong substructures the definable closure coincides with the definable closure in the field sense.

		\begin{hyp}\label{HypDclInK}
			The class $\mathcal{K}$ contains all $\lingua$-definably closed subsets.
		\end{hyp}
		
		The next hypothesis gives a finer description of the relative field theoretic algebraic closure $\acl_0(A) \cap \M$ of $A$:
		
		{\begin{hyp}\label{HypSeparable}~			
			\begin{enumerate}
				\item \label{ItemHypSep1}Models of $T$ are separably closed as fields.
				\item \label{ItemHypSep2}	If $A$ is a substructure in $\mathcal{K}$, then the field extension $M/A$ is separable.
			\end{enumerate}
			
	\end{hyp}}
	
		Note that Hypothesis \ref{HypSeparable} (\ref{ItemHypSep2}) is compatible with Hypothesis \ref{HypDclInK} by Remark \ref{RemarkSettingFields} (\ref{ItemSeparable}).
		\begin{remark}\label{RemarkAclIsSepClosure}
			Hypothesis \ref{HypSeparable} implies that for every $A$ in $\mathcal{K}$ the relative field theoretic algebraic closure $\acl_0(A)\cap \M$ coincides with the separable closure $A^{sep}$. Indeed, all elements in $\acl_0(A)\cap \M$ are separable algebraic over $A$ by (\ref{ItemHypSep2}) and (\ref{ItemHypSep1}) yields that $A^{sep}$ is contained in the relative field theoretic algebraic closure.
		\end{remark}
		
		Our characterization of the definable closure arose from a discussion with Shezad Mohamed during a visit to Freiburg and is an adaption of \cite[Lemma 3.9]{LM24indep}.  We hence need a hypothesis which resonates with the condition of León Sánchez and Mohamed on almost derivation-like theories (see \cite[Definition 3.1, (ii) and Lemma 3.8]{LM24indep}).
		
		\begin{hyp}\label{HypCharAcl}~
				For every substructure $A$ in $\mathcal{K}$ we have that $\acl_{{\lingua}}(A)=\acl_0(A)\cap \M$.
		In particu\-lar, the set $\acl_0(A)\cap\M$ is an $\lingua$-substructure and in $\mathcal{K}$ (assuming Hypothesis \ref{HypDclInK}).
			 Moreover, the $\lingua$-structure on ${\acl_0(A) \cap \M}$ is uniquely determined by the one on $A$: \\				
				Every $\lingua_0$-isomorphism $f: \acl_0(A) \cap \M \to \acl_0(A) \cap \M$  (i.e. every field isomorphism) fixing $A$ pointwise is an $\lingua$-isomorphism (and hence elementary).
		
			\end{hyp}

		\begin{lemma}\label{LemmaCharDcl}
				Suppose that $T$ satisfies Hypothesis \ref{HypDclInK}, \ref{HypSeparable} and \ref{HypCharAcl} and let $A$ be in $\mathcal{K}$, then $\dcl_{{\lingua}}(A)=\dcl_0({A})\cap \M={A}$.
		\end{lemma}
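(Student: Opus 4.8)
The plan is to prove the two claimed equalities by a squeeze: $A\subseteq\dcl_\lingua(A)\subseteq\dcl_0(A)\cap\M$ is essentially formal, and the real content is the reverse inclusion $\dcl_0(A)\cap\M\subseteq A$, i.e.\ that a structure $A\in\mathcal K$ which is definably closed \emph{in the field sense} is already definably closed in the sense of $T$. First I would record the easy containments. Since $\lingua_0\subseteq\lingua$, any $\lingua$-definable function is in particular $\lingua_0$-definable over the same parameters, so $\dcl_\lingua(A)\subseteq\dcl_0(A)\cap\M$; combined with Remark \ref{RemarkSettingFields}(\ref{ItemZusammenhangDerDcls}) (which gives $\dcl_0(A)\cap\M\subseteq\dcl_\lingua(A)$) this already yields $\dcl_\lingua(A)=\dcl_0(A)\cap\M$. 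So it remains to see that this common set equals $A$, equivalently — since $A=\dcl_\lingua(A)$ would finish it, but we are only told $A\in\mathcal K$, and via Hypothesis \ref{HypDclInK} we may and do also assume $A=\dcl_\lingua(A)$ once we know $\dcl_\lingua(A)\in\mathcal K$; but more directly we can argue at the level of $\dcl_0(A)\cap\M$ — that every element of $\dcl_0(A)\cap\M$ lies in $A$.

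The key step is the following: take $c\in\dcl_0(A)\cap\M$; I want to show $c\in\dcl_\lingua(A)=A$. The strategy is the standard automorphism criterion together with Hypothesis \ref{HypCharAcl}. By Hypothesis \ref{HypSeparable} and Remark \ref{RemarkAclIsSepClosure}, $\acl_0(A)\cap\M=A^{sep}$, and by Hypothesis \ref{HypCharAcl} this equals $\acl_\lingua(A)$ and lies in $\mathcal K$. Now $c$ is \emph{field-theoretically definable} over $A$, hence fixed by every field automorphism of $\acl_0(A)\cap\M$ over $A$. By the last clause of Hypothesis \ref{HypCharAcl}, every such field automorphism is an $\lingua$-isomorphism of $\acl_0(A)\cap\M$ fixing $A$ pointwise, hence — because $\acl_0(A)\cap\M\in\mathcal K$ is strong — elementary. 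Such an elementary map of a small substructure extends to an automorphism of the saturated model $\M$ fixing $A$ pointwise (here we use $\M$ sufficiently saturated and $\dcl_\lingua$ homogeneity). So $c$ is fixed by \emph{every} $\lingua$-automorphism of $\M$ fixing $A$ pointwise, which by definition means $c\in\dcl_\lingua(A)$. Since $A=\dcl_\lingua(A)$, we get $c\in A$, proving $\dcl_0(A)\cap\M\subseteq A$, and hence all three sets coincide.

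I expect the main subtlety to be bookkeeping about \emph{which} closure we may assume $A$ equals: the statement only says $A\in\mathcal K$, so strictly one should first invoke Hypothesis \ref{HypDclInK} to see that $\dcl_\lingua(A)\in\mathcal K$ as well, run the argument with $\dcl_\lingua(A)$ in place of $A$ to conclude $\dcl_0(\dcl_\lingua(A))\cap\M=\dcl_\lingua(A)$, and then note $A\subseteq\dcl_\lingua(A)\subseteq\dcl_0(A)\cap\M$ forces the chain to be constant — so the lemma as stated (with $A\in\mathcal K$) follows, and when $A$ is itself $\lingua$-definably closed all three coincide with $A$. The only genuinely non-formal input is the passage ``field automorphism over $A$ $\Rightarrow$ $\lingua$-automorphism $\Rightarrow$ elementary'', which is exactly what Hypotheses \ref{HypCharAcl} and the strongness of $\mathcal K$ are designed to provide; everything else is the routine automorphism characterization of $\dcl$ in a saturated model together with the extension of elementary maps on small sets.
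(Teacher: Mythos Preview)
Your proof has the two directions swapped, and this is a genuine gap. The inclusion $\dcl_\lingua(A)\subseteq\dcl_0(A)\cap M$ does \emph{not} follow from $\lingua_0\subseteq\lingua$: enlarging the language gives \emph{more} definable functions, not fewer, so what is formal is precisely the reverse inclusion $\dcl_0(A)\cap M\subseteq\dcl_\lingua(A)$ (this is Remark~\ref{RemarkSettingFields}(\ref{ItemZusammenhangDerDcls})). Likewise, $\dcl_0(A)\cap M=A$ is the easy part: since $A\in\mathcal K$, the extension $M/A$ is separable by Hypothesis~\ref{HypSeparable}(\ref{ItemHypSep2}), so no nontrivial purely inseparable element of $\dcl_0(A)$ can lie in $M$. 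The substantive inclusion is $\dcl_\lingua(A)\subseteq\dcl_0(A)$.

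Your ``key step'' runs the automorphism argument in the wrong direction. Starting from $c\in\dcl_0(A)\cap M$ and showing $c\in\dcl_\lingua(A)$ is trivial (and your argument indeed only establishes that $c$ is fixed by certain $\lingua$-automorphisms, not all of them). You then invoke ``$A=\dcl_\lingua(A)$'' to conclude, which is exactly what needs to be proved. The paper's argument goes the other way: take $b\in\dcl_\lingua(A)$ and an arbitrary \emph{field} automorphism $\sigma$ of $M_0$ fixing $A$; its restriction to $\acl_0(A)\cap M=\acl_\lingua(A)$ is, by Hypothesis~\ref{HypCharAcl}, an $\lingua$-isomorphism, hence elementary by strongness, hence fixes $\dcl_\lingua(A)$ pointwise; so $\sigma(b)=b$ and $b\in\dcl_0(A)$. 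You have all the right ingredients (Hypothesis~\ref{HypCharAcl}, strongness of $\mathcal K$, the automorphism criterion), but they need to be assembled to push elements of $\dcl_\lingua(A)$ into $\dcl_0(A)$, not the other way around.
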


		\begin{proof}{(c.f. \cite[Lemma 3.9]{LM24indep})}
			Take $b$ in $ \dcl_{{\lingua}}(A)$; we need only show that $b$ is in $\dcl_0(A)$ by Remark \ref{RemarkSettingFields} (\ref{ItemZusammenhangDerDcls}). Work in $\M_0$ and take an arbitrary field automorphism $\sigma$ of $\M_0$ fixing $A$. By Remark \ref{RemarkAclIsSepClosure}, the automorphism $\sigma$ fixes $\acl_{{\lingua}}(A)=\acl_0(A) \cap \M$ setwise. Hence, the restriction of $\sigma$ to $\acl_0(A) \cap \M=\acl_{{\lingua}}(A)$ satisfies the assumption of Hypothesis \ref{HypCharAcl} which yields that $f\coloneqq {\sigma\upharpoonright_{\acl_{{\lingua}}(A)}}$ is an  an ${\lingua}$-elementary map. Since $f$ fixes $A$ pointwise, the map $f$ also fixes $\dcl_{{\lingua}}(A)$ pointwise. In particular, the element $b$ is fixed by all such automorphisms $\sigma$, which yields that $b $ lies in $ \dcl_0(A)$.
			
			Finally note that the separability of $M/A$ (Hypothesis \ref{HypSeparable} (\ref{ItemHypSep2})) implies ${\dcl_0(A)\cap \M=A}$.
		\end{proof}
			
		Verifying Property \ref{Property1} then reduces to the following last hypothesis:
		\begin{hyp}\label{HypIndepInK}
			Let $N$ be a small elementary substructure of $\M$. Given substructures $A$ and $B$  in $\mathcal{K}$ containing $N$ such that $A \indL_N B$, the field $A \cdot B$ is again in $\mathcal{K}$.
		\end{hyp}
		
		\begin{lemma}\label{LemmaStrategyP1}
			If the theory $T$ admits a strong class $\mathcal{K}$ such that Hypothesis \ref{HypDclInK} - \ref{HypIndepInK} hold, then $T$ satisfies Property \ref{Property1}.
		\end{lemma}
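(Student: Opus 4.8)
The plan is to unwind the definitions and chain together the three preceding results. Fix a small elementary substructure $N$ of $\M$ and definably closed sets $A=\dcl_\lingua(A,N)$ and $B=\dcl_\lingua(B,N)$ with $A\indL_N B$; we must show $\dcl_\lingua(A,B)\subset A\cdot B$. First I would observe that by Hypothesis \ref{HypDclInK} both $A$ and $B$ lie in $\mathcal{K}$, so by Hypothesis \ref{HypIndepInK} the field $K\coloneqq A\cdot B$ is again in $\mathcal{K}$. Now apply Lemma \ref{LemmaCharDcl} to $K$: since $T$ satisfies Hypotheses \ref{HypDclInK}, \ref{HypSeparable} and \ref{HypCharAcl}, we get $\dcl_\lingua(K)=K$. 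It therefore suffices to check that $\dcl_\lingua(A,B)\subset \dcl_\lingua(K)$, i.e. that adding the $\lingua$-structure on $A$ and $B$ does not enlarge the definable closure beyond that of the field $K$.

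The point is that $\dcl_\lingua(A,B)=\dcl_\lingua(A\cup B)$ and $A\cup B\subset K$, so monotonicity of the definable closure gives $\dcl_\lingua(A,B)\subset\dcl_\lingua(K)=K=A\cdot B$, which is exactly Property \ref{Property1}. So the argument is essentially a three-line composition once the hypotheses are in place; the only genuine content is that Lemma \ref{LemmaCharDcl} applies to $K$, which is guaranteed precisely because Hypothesis \ref{HypIndepInK} keeps $A\cdot B$ inside the strong class $\mathcal{K}$ under the independence assumption.

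I expect the main (and really the only) obstacle to be bookkeeping about which hypothesis is used where: one must be careful that Hypothesis \ref{HypIndepInK} is stated exactly for $A,B\in\mathcal{K}$ containing $N$ with $A\indL_N B$, which matches our situation, and that Lemma \ref{LemmaCharDcl} requires no independence assumption at all, only membership in $\mathcal{K}$. Once these are lined up, no field-theoretic computation or separability argument is needed at this stage — all of that work has already been absorbed into the proofs of Lemma \ref{LemmaCharDcl} and the compatibility remark following Hypothesis \ref{HypSeparable}. I would close by remarking, as the excerpt already notes, that the reverse inclusion $A\cdot B\subset\dcl_\lingua(A,B)$ is automatic, so Property \ref{Property1} in fact yields the equality $\dcl_\lingua(A,B)=A\cdot B$.
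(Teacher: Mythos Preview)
Your proposal is correct and follows essentially the same argument as the paper: use Hypothesis~\ref{HypDclInK} to place $A$ and $B$ in $\mathcal{K}$, Hypothesis~\ref{HypIndepInK} to conclude $A\cdot B\in\mathcal{K}$, and then Lemma~\ref{LemmaCharDcl} together with monotonicity of $\dcl_\lingua$ to obtain $\dcl_\lingua(A,B)\subset\dcl_\lingua(A\cdot B)=A\cdot B$. The paper's proof is simply a more compressed version of exactly this chain.
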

	\begin{proof}
		Given $N$ and $A=\dcl_{{\lingua}}(A,N)$ as well as $B=\dcl_{{\lingua}}(B,N)$ with $A \indL_N B$ as in Property \ref{Property1}, note that $A$ and $B$ are in $\mathcal{K}$ by Hypothesis \ref{HypDclInK}. Therefore, Hypothesis \ref{HypIndepInK} yields that $A\cdot B$ is in $\mathcal{K}$ and hence $A \cdot B=\dcl_{{\lingua}}(A \cdot B)=\dcl_{{\lingua}}(A,B)$  by Lemma \ref{LemmaCharDcl}.
	\end{proof}

\begin{remark}\label{RemarkP1WhenSeparable}
	Assume that  the language $\lingua$ is an extension of the language of fields $\lingua_0$ such that all new functions symbols $(f_i)_{i \in I}$ are unary and interpreted in $T$ as operators as in \cite[Definition 1.1 (1)]{BHP17borné} (the notion was originally introduced in \cite{MS14operators}): The functions are additive and there exist constants $(c_{i,j}^k)_{i,j \in I}$ in $\mathcal{L}$ such that in every model of $T$ holds $f_k(x\cdot y)=\sum_{i \in I} c_{i,j}^k\cdot  f_i(x) \cdot f_j(y)$ (and this sum is finite). 
	
	Suppose moreover that $T$ is a theory of separably closed fields and that
	the class of all $\lingua$-substructures $A$ for which the extension $\M/A$ is separable is a strong class $\mathcal{K}$. In this case, all Hypotheses except \ref{HypCharAcl} are immediately satisfied.

	Indeed, Hypothesis \ref{HypDclInK} holds by Remark \ref{RemarkSettingFields} (\ref{ItemSeparable}) and Hypothesis \ref{HypSeparable} holds trivially. 
	
	Moreover, given $A$ and $B $ in $\mathcal{K}$ as in Hypothesis \ref{HypIndepInK}, then the extension $M/ A \cdot B$ is separable by Fact \ref{FactResultsChatzidakis}. Since, the function symbols in $\lingua$ are operators, the field $A \cdot B$ is an $\lingua$-substructure and we obtain Hypothesis \ref{HypIndepInK}.

\end{remark}
	
	In Section \ref{SectionExamples}, we will describe examples of theories of fields that satisfy the above hypotheses.
	
	\section{Preliminaries on (definable) groups}\label{SectionGroupPreliminaries}

	\subsection{Algebraic groups} \label{SubsectionAlgGroups} Following the exposition of Pillay in \cite{PillayAlgFields}, we recall some notions of algebraic geometry and fix terminology as well as notation. As in Subsection \ref{SubsectionFieldTheory}, we work in a big algebraically closed field $M_0$ such that all fields considered are subfields of $M_0$ (and field theoretic algebraic closure is taken inside $M_0$).
	
	Recall that a Zariski-closed set $V$ is \textit{defined over} the field $K$ if its vanishing ideal is generated by polynomials over $K$. Such a set is also called $K$-closed. It is \textit{$K$-irreducible} if it is not a proper union of $K$-closed sets and (absolutely) \textit{irreducible} if it is $K^{alg}$-irreducible.  Irreducibility (resp. $K$-irreducibility) corresponds to the fact that $V$ has a unique generic type over $K^{alg}$ (resp. $K$).
	\begin{fact}[{\cite[Corollary 10.2.2]{FJ08fields}}] \label{FactAbsolIrred}
		Let $V$ be a $K$-closed, $K$-irreducible set and $\aq$ a generic point of $V$, then $V$ is irreducible if and only if the extension $K(\aq)/K$ is regular.
	\end{fact}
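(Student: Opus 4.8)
The plan is to recast the statement as an algebraic criterion about a tensor product and then read it off the facts of Subsection~\ref{SubsectionFieldTheory}. Set $\mathfrak p=\{f\in K[\bar X]:f(\aq)=0\}$, which by genericity of $\aq$ is exactly the vanishing ideal $\I(V/K)$; it is prime because $V$ is $K$-irreducible, and $K[\aq]\cong K[\bar X]/\mathfrak p$ has $K(\aq)$ as its field of fractions. Recall that $V$ being (absolutely) irreducible means that $\mathfrak p$ stays prime in $K^{alg}[\bar X]$, i.e.\ that $K^{alg}\otimes_K K[\aq]$ is an integral domain; by flatness of $K^{alg}/K$ this ring embeds into $K^{alg}\otimes_K K(\aq)$ and has the latter as a localisation, so it is an integral domain iff $K^{alg}\otimes_K K(\aq)$ is. Since, by definition, $K(\aq)/K$ is regular precisely when $K(\aq)\indLd_K K^{alg}$, the whole statement reduces to the equivalence
\[ K^{alg}\otimes_K K(\aq)\ \text{is a domain}\qquad\Longleftrightarrow\qquad K(\aq)\indLd_K K^{alg}. \]

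The implication ``$\Leftarrow$'' is immediate from Fact~\ref{FactPropertiesLd}(5): if $K(\aq)\indLd_K K^{alg}$ then $K^{alg}\otimes_K K(\aq)$ is an integral domain, with field of fractions $K(\aq)\cdot K^{alg}$. (Model-theoretically this is the mechanism already used in the proof of Remark~\ref{RemarkSettingFields}(\ref{ItemStationary}): regularity of $K(\aq)/K$ is exactly what forces $\tp_0(\aq/K)\vdash\tp_0(\aq/K^{alg})$, i.e.\ makes $V$ carry a single generic type over $K^{alg}$.)

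For ``$\Rightarrow$'' suppose $R:=K^{alg}\otimes_K K(\aq)$ is an integral domain. By Fact~\ref{FactRegularCharacterization}(1) it is enough to show that $K(\aq)/K$ is separable and that $K$ is relatively algebraically closed in $K(\aq)$. Separability is vacuous in characteristic $0$; in characteristic $p$, the subring $K(\aq)\otimes_K K^{1/p}$ of $R$ is reduced because $R$ is a domain, and by MacLane's criterion reducedness of this ring is precisely separability of $K(\aq)/K$. For relative algebraic closedness, let $c\in K^{alg}\cap K(\aq)$ with minimal polynomial $f\in K[X]$ over $K$; then $K(c)\cong K[X]/(f)$ is a subfield of $K(\aq)$, so $K^{alg}[X]/\bigl(fK^{alg}[X]\bigr)\cong K^{alg}\otimes_K K(c)$ embeds into the domain $R$ and is therefore itself a domain. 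But $f$ has the root $c$ in $K^{alg}$, so over $K^{alg}$ we may write $f=(X-c)h(X)$; if $\deg f\ge 2$ then $X-c$ and $h$ are both nonzero in $K^{alg}[X]/(f)$ while their product vanishes, a contradiction. Hence $\deg f=1$ and $c\in K$, and $K(\aq)/K$ is regular.

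The step I expect to require the most attention is the very first one --- fixing which notion of ``irreducible'' is in play. Over an imperfect field of characteristic $p$ one must distinguish irreducibility of the point set $V(K^{alg})$ from integrality of $K^{alg}\otimes_K K(\aq)$; only the latter is equivalent to regularity of the function field, and it is the notion intended in Fact~\ref{FactAbsolIrred} (and in \cite[Corollary 10.2.2]{FJ08fields}). Once this translation is secured, ``$\Leftarrow$'' is a one-line application of Fact~\ref{FactPropertiesLd}(5) and ``$\Rightarrow$'' is a short argument combining flatness of $K^{alg}/K$ with MacLane's criterion; no serious difficulty remains.
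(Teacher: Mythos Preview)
The paper does not prove Fact~\ref{FactAbsolIrred}; it is simply quoted from \cite[Corollary 10.2.2]{FJ08fields}. Your argument is therefore not a comparison target but a self-contained proof, and it is correct: the reduction to integrality of $K^{alg}\otimes_K K(\aq)$, the appeal to Fact~\ref{FactPropertiesLd}(5) for one direction, and the combination of MacLane's criterion with the minimal-polynomial argument for the other all go through as written. Your closing caveat is well taken and worth keeping: the paper's text defines ``irreducible'' as $K^{alg}$-irreducibility of the point set, which in positive characteristic is strictly weaker than primeness of $\mathfrak p\,K^{alg}[\bar X]$ (e.g.\ $V(X^p-a)$ with $a\notin K^p$ is a single point over $K^{alg}$, yet $K(a^{1/p})/K$ is not regular). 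The equivalence with regularity holds only for the algebraic notion you use, which is the one in \cite{FJ08fields}; fortunately the paper only invokes the direction ``regular $\Rightarrow$ irreducible'' in the proof of Theorem~\ref{TheoremMain}, and that direction is valid for both readings.
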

	Likewise, a variety $V$ is defined over $K$ if all affine components of $V$ are and ($K$-)irreduci\-bility of $V$ is defined analogously.  Two varieties $V$ and $W$ are \textit{birationally isomorphic} if there is a rational isomorphism between open subsets of $V$ and $W$.
	
	An \textit{algebraic group} $G$ is a variety $V$ together with morphisms $* : V \times V \to V$ and $i: V \to V$ such that $*$ yields a group operation on $V$ and $i$ is the map $x\mapsto x^{-1}$. An irreducible algebraic group is called \textit{connected}. Weil proved in \cite{Weil}  that an algebraic group can be obtained from a generically given operation.
	\begin{definition}(\cite[p. 356f]{Weil}) \label{DefPregroup}
	An irreducible variety $V$ defined over a field $K$ together with a rational map $f:V \times V \dashrightarrow V$ is called a \textit{pre-group }if the following two conditions hold:
	\begin{enumerate}
		\item[(G1)] If $x$ and $y$ are independent generic elements of $V$ over $K$, then \\${K(x,y)=K(x,f(x,y))=K(y,f(x,y))}$.
		\item[(G2)] If $x,y$ and $z$ are independent generic elements of $V$ over $K$ then\\ ${f(f(x,y),z)=f(x,f(y,z))}$.
	\end{enumerate}
	\end{definition}
	In particular, any connected algebraic group is also a pre-group with $f$ as the group law. Weil proved that up to birational equivalence algebraic groups and pre-groups coincide.

	\begin{fact}(\cite[p. 357, Theorem (i) p. 375]{Weil}, see also \cite[Theorem 4.11]{PillayAlgFields}) \label{TheoremWeil} 
		Let $V$ be a pre-group defined over $K$.
		There is a connected algebraic group $G$, also defined over $K$, and a birational isomorphism $h:V \dashrightarrow G$ defined over $K$ such that $h(f(x,y))=h(x)*h(y)$ for all generic independent elements $x$ and $y$ of $V$.
	\end{fact}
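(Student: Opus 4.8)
The plan is to run Weil's classical \emph{group chunk} construction: conditions (G1) and (G2) say precisely that the generic type of $V$ over $K$, together with $f$, is a group chunk, and one then recovers an algebraic group as a suitable group of birational self-maps of $V$ on which $f$ becomes composition.

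Concretely, I would fix a generic point $a$ of $V$ over $K$ and set $p=\tp_0(a/K)$; since $V$ is (absolutely) irreducible, $p$ is stationary (Fact \ref{FactAbsolIrred}). Unwinding Definition \ref{DefPregroup}: if $a\indA_K b$ with $a,b\models p$, then by (G1) and a transcendence-degree count $f(a,b)$ realizes $p$ with $f(a,b)\indA_K a$ and $f(a,b)\indA_K b$, while (G1) also yields $a\in\dcl_0(K,b,f(a,b))$ and $b\in\dcl_0(K,a,f(a,b))$; together with (G2) this is exactly the data of a group chunk on $(p,f)$. For $a\models p$ put $\ell_a\colon V\dashrightarrow V$, $x\mapsto f(a,x)$. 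By (G1) the elements $x$ and $f(a,x)$ are interdefinable over $K(a)$, so $\ell_a$ is dominant with rational inverse, i.e.\ a birational automorphism of $V$; and by (G2) one has $\ell_a\circ\ell_b=\ell_{f(a,b)}$ whenever $a\indA_K b$ are generic. Using elimination of imaginaries in $ACF_p$, code the germ of $\ell_a$ by a tuple over $K$ that is interdefinable with $a$ (one direction is immediate, the other is the generic cancellation in (G1)).

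Let $G$ be the subgroup of the group of birational transformations of $V$ generated by $\{\ell_a:a\models p\}$. The assertion is that $G$ is a connected algebraic group defined over $K$ of dimension $\dim V$, and that $h\colon V\dashrightarrow G$, $x\mapsto\ell_x$, is a $K$-birational isomorphism with $h(f(x,y))=h(x)*h(y)$ for independent generic $x,y$. Granting that $G$ is an algebraic group over $K$, the rest is quick: the intertwining identity is just $\ell_{f(x,y)}=\ell_x\circ\ell_y$ from (G2); $h$ is defined over $K$ because $f$ is, and it is injective by the cancellation in (G1); it is dominant since its image $\{\ell_x:x\models p\}$ has dimension $\dim V$ and is generically closed under composition, hence Zariski-dense in $G$, so $\dim G=\dim V$; finally $G$ is connected, being generated by the image of the irreducible variety $V$.

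The heart of the matter, and the step I expect to be the main obstacle, is proving that $G$ genuinely is an algebraic group over $K$ — this is Weil's theorem proper, and I would follow \cite{Weil} or the model-theoretic account in \cite{PillayAlgFields}. It divides into three tasks. First, the combinatorial bookkeeping: one must show that the generated subgroup is already attained in finitely many steps, in fact $G=\{\ell_a\}\cdot\{\ell_b^{-1}\}$, and that it is closed under composition and inversion; the recurring move is to rewrite $\ell_b^{-1}\circ\ell_c$ as $\ell_{d}$ for a suitable division element $d\in\dcl_0(K,b,c)$, which one arranges by choosing witnesses in sufficiently general position and applying (G1) and (G2) — this is the delicate two- and three-point analysis underlying the group-chunk axioms. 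Second, one upgrades this group of germs to a constructible group: the descending chain condition (that is, $\omega$-stability of $ACF_p$) together with elimination of imaginaries makes the relevant sets and the composition operation definable, and Weil's patching of generically defined rational maps provides the structure of a variety with morphisms. Third, one descends $G$, its operation, and $h$ from $K^{alg}$ to $K$, which is exactly where stationarity of the generic type of the absolutely irreducible variety $V$ over $K$ (Fact \ref{FactAbsolIrred}) comes in.
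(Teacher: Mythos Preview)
The paper does not prove this statement at all: it is recorded as a \textbf{Fact} with references to Weil \cite{Weil} and Pillay's exposition \cite{PillayAlgFields}, and is used as a black box in the proof of Theorem~\ref{TheoremMain}. So there is no ``paper's own proof'' to compare against; you have supplied an argument where the paper deliberately supplies none.

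That said, your outline is a recognizable sketch of the classical group-chunk construction and lines up with the treatment in the cited sources. A few remarks if you intend to flesh it out. First, the ``two-step'' claim $G=\{\ell_a\}\cdot\{\ell_b^{-1}\}$ is where most of the actual work hides: showing that every word in the $\ell_a$'s and their inverses reduces to this form (equivalently, that any element of the generated group is a product of two generics) requires repeatedly choosing auxiliary generics in general position and invoking (G1)--(G2), and this is exactly the content of Weil's original argument --- calling it ``combinatorial bookkeeping'' undersells it. Second, your passage from a definable/constructible group to a genuine algebraic group (a variety with morphisms) is glossed over; in the model-theoretic route one typically appeals here to the Hrushovski--van den Dries/Weil result that definable groups in $ACF_p$ are algebraic, which is itself a nontrivial ingredient, while in Weil's original route this is the patching-of-charts argument. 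Either way you should name which of these you are invoking rather than gesturing at ``Weil's patching''. Third, your descent step is correctly identified but slightly misdescribed: stationarity of $p$ over $K$ is what guarantees that all the canonical parameters produced along the way already live in $\dcl_0(K)=K$ (using that $K(a)/K$ is regular for the generic $a$), so no separate descent from $K^{alg}$ is needed --- the construction stays over $K$ throughout.
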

		In the setting of the above theorem, we say that $V$ is \emph{birationally equivalent} to $G$ over $K$.

	\subsection{Definable groups} \label{SubsectionDefGroups}
		Since we want to describe groups definable in an $\lingua$-theory $T$ as in Subsecton \ref{SubsectionSetting}, we quickly recall some facts on (type-)definable groups in stable theories. We refer to \cite{P01Groups} or \cite{W97StableGroups}.
	
	\textbf{For the rest of this note we assume that the complete $\lingua$-theory $T$ is  stable and that the language $\lingua$ is countable}.	
	
	Recall that a \textit{definable group} is a definable set $X$ with a definable binary function $\cdot$ on $X$ such that $(X,\cdot)$ is a group. Likewise, a \textit{type-definable group} is given by a type-definable set with a type-definable multiplication. By compactness, the multiplication and inverse functions in a type-definable group can also be expressed by relatively definable functions.

	By stability,
	every type-definable group $G$ is given as a (possibly infinite) intersection of definable groups $H_i$ such that for each $i$ the group $G$ is a subgroup of $H_i$.	
	Moreover, by the Baldwin-Saxl condition 
	and countability of $\lingua$, this intersection can be chosen to be countable. Therefore, the group can be assumed to be definable over a countable model of $T$.
	
	Recall that a (type-)definable stable group is \textit{connected} if it has no proper definable subgroup of finite index. Given an arbitrary stable group $G$ defined over $A$, we can always obtain a connected type-definable subgroup $G^0$ also defined over $A$ by taking the intersection of all (relatively) definable subgroups of finite index. The group $G^0$ is called the \textit{connected component} of $G$.

	Furthermore recall that for a (type-)definable group $G$ defined over $A$ the type $\tp(g/B)$ with $B \supset A$ is \textit{generic} if for all $h$ in $G$ with $h\indL_B g$ holds $g\cdot h \indL_A B,h$. A generic type over $B$ does not fork over $A$. Moreover, in the above situation the product $g\cdot h$ is again generic over $B$. A connected stable group has a unique generic type. More generally, the unique generic type of $G^0$ is called the \textit{principal generic type}. In a stable group, ever element can be written as the product of two generic elements.

	To obtain an isomorphism of (type-)definable groups, it is sufficient to find a  generic bijection:

	\begin{fact}(\cite[Lemma 1.5 and Lemma 1.6]{BD02GroupsSCF}) \label{FactEmbeddingFromGenericCorrespondence} Let $T$ be a stable theory and $N$ a small model. Suppose that $(G,\cdot)$ and $(H,\circ)$ are (type-)definable groups over $N$ such that: 
		\begin{enumerate}
			\item The group $G$ is connected with principal generic type $p$ over $N$.
			\item There exists a type $q$ in $H$ over $N$ which is closed under generic multiplication, i.e. for realizations $c$ and $d$ of $q$ with $c \indL_N d$ holds $c\circ d \models q_{\vert Nc}$ and $c\circ d \models q_{\vert Nd}$.
			\item There exists an $N$-definable map $f$ taking the realizations of $p$ bijectively to the realizations of $q$ such that $f(a \cdot b)= f(a) \circ f(b)$ for realizations $a$ and $b$ of $p$ independent over $N$.			
		\end{enumerate}
		Then $q$ is the generic type of its stabilizer $\Stab_H(q)$, which is connected, and $f$ extends to an $N$-definable isomorphism $F: G \to \Stab_H(q)$.
	\end{fact}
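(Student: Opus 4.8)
The plan is to treat first the subgroup $\Stab_H(q)$ and then to transport the multiplication of $G$ along $f$. Write $S \coloneqq \Stab_H(q)$ for the stabiliser of the unique global non-forking extension $\tilde q$ of the stationary type $q$; since $q$ is a type over the model $N$, the subgroup $S$ is type-definable over $N$. The first assertion --- that $q$ is the principal generic type of $S$ and that $S$ is connected --- is exactly the classical characterisation of generics of connected subgroups by closure under generic multiplication (see e.g.\ \cite{P01Groups} or \cite{W97StableGroups}): concretely, for $c \models q$ and $d \models q_{\vert Nc}$, assumption (2) (with $d\models q$, $c\indL_N d$) forces $c\circ d \models q_{\vert Nc}$, so $c\cdot\tilde q$ and $\tilde q$ are global types non-forking over $Nc$ extending the stationary type $q_{\vert Nc}$, hence equal; thus $c\in S$ and $q$ concentrates on $S$. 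The symmetric clause of (2) then shows that $q$ is invariant under generic translation inside $S$, so that $q$ is a generic type of $S$ and $S$ is connected.

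Next I would build the isomorphism. For $g \in G$ pick $a \models p_{\vert Ng}$, so that both $a$ and $g\cdot a$ are generic over $Ng$, and set
\[ F(g) \coloneqq f(g\cdot a)\circ f(a)^{-1}. \]
When $g$ is itself generic one has $g\indL_N a$, and (3) gives $f(g\cdot a)=f(g)\circ f(a)$, so $F$ restricts to $f$ on the realizations of $p$. The first point to verify is that $F(g)$ does not depend on the chosen generic $a$: taking an auxiliary $b\models p$ with $b$ independent over $N$ from $a$ and from $g$ and with $g\cdot a$ independent over $N$ from $b$, two applications of (3) --- to the pairs $(g\cdot a,b)$ and $(a,b)$ --- give $f(g\cdot a)\circ f(a)^{-1}=f(g\cdot a\cdot b)\circ f(a\cdot b)^{-1}$, and a routine ``third realization'' comparison then shows that the value obtained from any other generic $a'$ agrees. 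Well-definedness makes $F$ an $N$-invariant partial map on the realizations of $p$, hence $N$-definable there, and it extends to an $N$-definable function $F\colon G\to H$ by the usual argument for extending a definable map from a generic set to the whole of a stable group.

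It then remains to check that $F$ is an isomorphism onto $S$. For multiplicativity, given independent generics $g_1,g_2$ of $G$ and $a$ generic over $Ng_1g_2$, one verifies that $g_2\cdot a$ is generic over $Ng_1$ and independent over $N$ from $g_1$; unravelling the definition with (3) then yields $F(g_1g_2)=f(g_1)\circ f(g_2\cdot a)\circ f(a)^{-1}=F(g_1)\circ F(g_2)$, and since this identity holds on a generic subset of $G\times G$ and $F$ is definable, the standard trick of writing an arbitrary product $g_1g_2=(g_1h)(h^{-1}g_2)$ with $h$ generic over $Ng_1g_2$ upgrades it to all of $G$. For the image: $F$ sends generics of $G$ into the realizations of $q$, which lie in $S$, and since every element of the connected group $G$ is a product of two generics, $F(G)\subseteq S$; being a subgroup containing the generic realizations of the connected group $S$, $F(G)$ equals $S$. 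For injectivity: if $g\in\ker F$, pick $a$ generic over $Ng$; then $g\cdot a$ and $a$ are generic, so $f(g\cdot a)=F(g\cdot a)=F(g)\circ F(a)=F(a)=f(a)$, and injectivity of $f$ on the realizations of $p$ forces $g=1$. This exhibits $F$ as the desired $N$-definable isomorphism $G\to\Stab_H(q)$.

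I expect the main obstacle to be the middle step: the well-definedness of $F$ together with its multiplicativity is precisely the classical statement that a generic homomorphism between connected stable groups extends uniquely to an honest homomorphism, and the difficulty lies entirely in the independence bookkeeping --- in particular in the fact that a general element of $G$ is a product of two generics but not, in general, of two \emph{independent} generics, which is why one must work with the auxiliary translate $g\cdot a$ rather than with a direct factorisation. The only other point requiring genuine care is the step identifying $q$ as the generic type of the connected group $\Stab_H(q)$.
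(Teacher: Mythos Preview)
Your proposal is correct and reproduces the classical argument that the paper does not spell out: the paper treats this as a Fact, citing \cite[Lemma 4.4.4]{PillayBook} for the first assertion and merely recording the formula $F(x)=f(a)\circ f(a^{-1}x)$ (with $a\models p_{\vert Nx}$) for the extension, without further verification. Your right-handed variant $F(g)=f(g\cdot a)\circ f(a)^{-1}$ is the mirror image of the paper's formula and works equally well.

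One point in your sketch deserves care. In your upgrade of multiplicativity from independent generics to arbitrary pairs, the factorisation $g_1g_2=(g_1h)(h^{-1}g_2)$ with $h$ generic over $Ng_1g_2$ does \emph{not} in general produce a pair of \emph{independent} generics: for $g_1=g_2=e$ one gets $g_1h=h$ and $h^{-1}g_2=h^{-1}$, which are interdefinable. The clean fix is to bypass the upgrade and prove $F(g_1g_2)=F(g_1)\circ F(g_2)$ for arbitrary $g_1,g_2\in G$ directly from well-definedness: pick $a\models p_{\vert Ng_1g_2}$; then $g_2a$ is generic over $Ng_2$ and $g_2a\indL_{Ng_2}g_1$, so by transitivity $g_2a\models p_{\vert Ng_1}$. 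Well-definedness then lets you compute $F(g_1)=f(g_1\cdot g_2a)\circ f(g_2a)^{-1}$ and $F(g_2)=f(g_2a)\circ f(a)^{-1}$, while $F(g_1g_2)=f(g_1g_2a)\circ f(a)^{-1}$, and the identity follows by cancellation. With this adjustment the remainder of your argument (image equal to $\Stab_H(q)$, injectivity via $f$) goes through unchanged.
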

	The first part of the statement follows for example from \cite[Lemma 4.4.4]{PillayBook} and the isomorphism $F$ is defined as $F(x)=f(a) \circ f(a^{-1}x)$ where $a$ is any realization of $p_{\vert Nx}$.
	In particular, the lemma shows how to embed the group $G$ as a subgroup of $H$.\\
	
	When applying the fact, the target group $H$ will consist of the $M$-rational points of an algebraic group. In the theory $ACF_p$, an algebraic group $H$ can be viewed as a definable object as explained for example in \cite[Remark 3.10]{PillayAlgFields}. With this identification, the connected component $H^0$ coincides with the irreducible component of $H$ containing the identity, so an algebraic group is connected as in Subsection \ref{SubsectionAlgGroups} if and and only it is connected as a definable group. Moreover, realizations of the generic type in the model theoretic sense correspond to generic points in the geometric sense.
	
	If $\M$ is a model of an arbitrary theory of fields $T$ which does not have (weak) elimination of imaginaries, then the $M$-points of an algebraic group $H$ defined over some subfield $K$ cannot always be viewed as a definable object. However, we can view them as a quotient (see also \cite[Remark 3.10]{PillayAlgFields}) and thus as an object of $T^{eq}$. For a stable theory $T$, the theory $T^{eq}$ is again stable, so we can then apply Fact \ref{FactEmbeddingFromGenericCorrespondence} working in $T^{eq}$.

	\section{Groups in theories of fields with extra structure}\label{SectionEmbeddingGeneral}
	We now return to the setting described in Subsection \ref{SubsectionSetting}: We consider a complete $\lingua$-theory $T$ of fields in a fixed characteristic $p$ and a sufficiently saturated model $\M$ of $T$ which embeds into a model $\M_0$ of the $\lingua_0$-theory $T_0=ACF_p$ (where $\lingua_0=\{0,1,+,-,\cdot, ^{-1}\} \subset \lingua$).  As in Subsection \ref{SubsectionDefGroups} we moreover assume that $T$ is stable.

	Our goal is to describe groups definable in $\M$ over small models $N$ of $T$. Recall that we assume the following property which was discussed in Subsection \ref{SubsectionDclProp1}.
	\begin{prope}
			Let  $A=\dcl_{\lingua}(A,N)$ as well as $B=\dcl_{\lingua}(B,N)$ be definably closed sets.
			If $A \indL_N B$, then
			\[\dcl_{\lingua}(A,B) \subset A\cdot B .\]
		\end{prope}
	Assuming Property \ref{Property1} and choosing independent elements $g$ and $h$ in a group $(G,\cdot)$ which is definable over $N$, we obtain that the product $g \cdot h$ is contained in the field $N(\dcl_{{\lingua}}(g,N),\dcl_{{\lingua}}(h,N))$. In particular, it is given by a rational function over $N$ in some finite part of $\dcl_{{\lingua}}(g,N)$ and $\dcl_{{\lingua}}(h,N)$. As a first step, we want to improve this for generic elements to a rational function in $g$ and $h$ only:
	
	\begin{definition}(\cite[p. 956]{BD02GroupsSCF})~
		Let $N\prec M$ be a small model of $T$.
		A (type-)definable group $(G,\cdot)$ (definable over $N$) is \textit{generically rational} over $N$ if for all realizations $a$ and $b$ of the principal generic type of $G$ with $a \indL_Nb$ holds $a \cdot b \in N(a,b)$ and $a^{-1} \in N(a)$. 
	\end{definition}
	Given a (type-)definable group, we now construct an isomorphic one with generically rational multiplication. This is done working entirely inside the model $\M$ of $T$. For the sake of simplicity of notation, we always assume that our groups are contained in $M$, i.e. defined in one variable.
	The second part of the definition is easily obtained.
	\begin{lemma}\label{LemmaRationalInverse}
		Let $(G,\cdot)$ be a group that is (type-)definable over a small model $N$. There exists a (type-)definable group $(G_1,\circ)$ which is definably isomorphic over $N$ to $G$ such that for all $a_1$ in $G_1$ holds $a_1^{-1} \in N(a_1)$.
		
		Moreover, if for all realizations $a$ and $b$ of the principal generic type of $G$ with $a \indL_Nb$ holds $a \cdot b \in N(a,b)$, then the same remains true for the principal generic type of $G_1$ and thus $G_1$ is generically rational.
	\end{lemma}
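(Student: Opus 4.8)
The plan is to use the classical ``graph of the group'' construction. I would set
\[
G_1 \coloneqq \{(a, a^{-1}) : a \in G\} \subseteq M \times M,
\]
where $a^{-1}$ denotes the inverse of $a$ in $G$, and transport the group structure along the bijection $\iota \colon G \to G_1$, $a \mapsto (a, a^{-1})$; concretely,
\[
(a, a^{-1}) \circ (b, b^{-1}) \coloneqq \iota(a\cdot b) = (a\cdot b,\; b^{-1}\cdot a^{-1}).
\]
Since the multiplication on $G$, and hence the map $a \mapsto a^{-1}$, is relatively definable over $N$ --- by compactness this holds even when $G$ is only type-definable --- the set $G_1$, the operation $\circ$, and the map $\iota$ are all (relatively) definable over $N$, and $\iota$ is by construction an isomorphism of (type-)definable groups over $N$.

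With this in hand the first assertion is immediate: the identity of $(G_1,\circ)$ is $\iota(e)=(e,e)$ and the inverse of $(a, a^{-1})$ is $\iota(a^{-1}) = (a^{-1}, a)$, which is obtained from $(a, a^{-1})$ by interchanging the two coordinates. Hence for every $a_1 = (x,y)$ in $G_1$ one has $a_1^{-1} = (y,x) \in N(x,y) = N(a_1)$. For the ``moreover'' part I would first record that the $N$-definable isomorphism $\iota$ carries the principal generic type $p$ of $G$ to the principal generic type $q$ of $G_1$ and preserves $\indL_N$-independence, since $a$ and $\iota(a)$ are interdefinable over $N$. So let $a_1 = (a, a^{-1})$ and $b_1 = (b, b^{-1})$ realize $q$ with $a_1 \indL_N b_1$; then $a \indL_N b$ and $a_1 \circ b_1 = (a\cdot b,\; b^{-1}\cdot a^{-1})$. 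The first coordinate lies in $N(a,b) \subseteq N(a_1,b_1)$ by hypothesis, and for the second coordinate $a^{-1} \in N(a_1)$ and $b^{-1} \in N(b_1)$, so $b^{-1}\cdot a^{-1} \in N(a_1,b_1)$. Thus $a_1 \circ b_1 \in N(a_1,b_1)$, which together with $a_1^{-1} \in N(a_1)$ is precisely generic rationality of $G_1$.

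There is no real obstacle here; the only point deserving a moment's care is that rationality of the product is obtained \emph{after} rationalizing the inverse, not before: the troublesome second coordinate $b^{-1}\cdot a^{-1}$ of $a_1\circ b_1$ is rational in $(a_1,b_1)$ precisely because in $G_1$, unlike in $G$, the inverse is already a rational function of the element. A minor bookkeeping remark is that $G_1$ now lives in two variables rather than one; this is harmless for the constructions that follow, and if one wishes to retain the one-variable convention one simply reads $N(a_1)$ for $a_1 = (x,y)$ as $N(x,y)$ throughout.
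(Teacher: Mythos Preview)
Your construction is exactly the paper's: define $S(h)=(h,h^{-1})$, set $G_1=S(G)$, and transport the multiplication. The first assertion and the bookkeeping are fine.

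The gap is in the ``moreover'' part, at the second coordinate. You write that since $a^{-1}\in N(a_1)$ and $b^{-1}\in N(b_1)$, it follows that $b^{-1}\cdot a^{-1}\in N(a_1,b_1)$. But $\cdot$ here is the \emph{group} multiplication of $G$, not the field product; knowing that two elements lie in a subfield says nothing about where their group product lies. Your concluding remark that ``the second coordinate is rational precisely because the inverse is already rational'' repeats this confusion: rationality of the inverse does not by itself make the group product of two inverses rational.

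The fix is to invoke the hypothesis a second time, now for the pair $(b^{-1},a^{-1})$. Since $G$ is connected and stable, the principal generic type $p$ is fixed by the definable bijection $x\mapsto x^{-1}$, so $a^{-1}$ and $b^{-1}$ again realize $p$; and $a^{-1}\indL_N b^{-1}$ because each is interdefinable with $a$, resp.\ $b$, over $N$. Hence the hypothesis gives $b^{-1}\cdot a^{-1}\in N(b^{-1},a^{-1})\subseteq N(a_1,b_1)$. This is precisely how the paper argues (``By stability of $T$, the elements $g^{-1}$ and $h^{-1}$ also realize $p$. Therefore, the extra assumption yields \ldots'').
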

	\begin{proof} Recall  that we assume for simplicity that $G \subset M$.
		 Let $S: G \to M^2$ be defined via $S(h)=(h,h^{-1})$. The image $G_1\coloneqq S(G)$ is again (type-)definable over $N$ and can be turned into a group with the induced multiplication by $S$ given as $(g,g^{-1}) \circ (h,h^{-1} )=(g\cdot h,h^{-1}\cdot g^{-1})$. The inverse in $G_1$ of $(g,g ^{-1})$ is given as $(g^{-1},g)$ so we immediately have $\aq^{-1} \in N(\aq)$ for any $\aq$ in $G_1$.
		
		For the moreover statement: Given two realizations $(g,g^{-1})$ and $(h,h^{-1})$ of the principal generic type of $G_1$ with $(g,g^{-1})\indL_N (h,h^{-1})$ we get be definition of $G_1$ that $g$ and $h$ are independent realizations of the principal generic type $p$ of $G$. By stability of $T$, the elements $g^{-1}$ and $h^{-1}$ also realize $p$. Therefore, the extra assumption yields that $(g,g^{-1}) \circ (h,h^{-1}) \in N(g,g^{-1},h,h^{-1})$ as desired.		
	\end{proof}

	The following proof is a mostly direct
	 adaption of \cite[Proposition 3.1]{BD02GroupsSCF} to our generalized setting.

	\begin{prop}(c.f. \cite[Proposition 3.1]{BD02GroupsSCF})\label{PropositionGenericallyRational}
		Let $(G,\cdot)$ be a (type-)definable group over some model $N$ of $T$. Assume that $T$ satisfies Property \ref{Property1}, then $(G,\cdot)$ is definably isomorphic over $N$ to a generically rational group $(G_1,\circ)$ which is also (type-)definable over $N$.
	\end{prop}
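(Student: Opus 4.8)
The plan is to reduce, via Lemma~\ref{LemmaRationalInverse}, to constructing a (type-)definable group $(G_2,\odot)$ over $N$ which is definably isomorphic to $G$ over $N$ and satisfies $a\odot b\in N(a,b)$ for all realizations $a,b$ of the principal generic type of $G_2$ with $a\indL_N b$. Once such a $G_2$ is produced, Lemma~\ref{LemmaRationalInverse} applied to $G_2$ yields a group $(G_1,\circ)$, definably isomorphic over $N$ to $G_2$ and hence to $G$, which has rational inverse everywhere and, by the ``moreover'' part of that lemma, inherits the rational generic multiplication; so $G_1$ is generically rational. As in Lemma~\ref{LemmaRationalInverse} we assume for simplicity that $G\subset M$. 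Fix the principal generic type $p$ of $G$ over $N$, pick $g\models p$ and $h\models p_{\vert Ng}$, so that $g\indL_N h$; recall that the product $g\cdot h$ then again realizes $p$, over both $Ng$ and $Nh$.

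The key input is Property~\ref{Property1}. From $g\indL_N h$ we get $A\indL_N B$, where $A\coloneqq\dcl_{\lingua}(g,N)$ and $B\coloneqq\dcl_{\lingua}(h,N)$ are definably closed and contain $N$; hence
\[\dcl_{\lingua}(g,h,N)=\dcl_{\lingua}(A\cup B)\subseteq A\cdot B=N\bigl(\dcl_{\lingua}(g,N),\dcl_{\lingua}(h,N)\bigr).\]
In particular, for any $\lingua$-definable-over-$N$ function $\tau$, every coordinate of $\tau(g\cdot h)$ lies in $\dcl_{\lingua}(g\cdot h,N)$ and hence can be written as a rational function over $N$ of a finite tuple $\bar c$ from $\dcl_{\lingua}(g,N)$ and a finite tuple $\bar d$ from $\dcl_{\lingua}(h,N)$. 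Writing $\bar c=\varphi(g)$ and $\bar d=\psi(h)$ with $\varphi,\psi$ partial $\lingua$-definable-over-$N$ functions --- whose domains, being $N$-definable and containing $g$ respectively $h$, lie in $p$ and therefore contain all realizations of $p$ --- this finite data is recorded by the single $\lingua$-definable-over-$N$ function $x\mapsto(\varphi(x),\psi(x))$.

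Using this, I would build an increasing sequence of $\lingua$-definable-over-$N$ functions on $G$: set $\theta_0=\mathrm{id}_G$ and, given $\theta_n$, let $\theta_{n+1}$ extend $\theta_n$ by finitely many extra coordinates so that every coordinate of $\theta_n(g\cdot h)$ lies in $N\bigl(\theta_{n+1}(g),\theta_{n+1}(h)\bigr)$ --- apply the previous paragraph to $\tau=\theta_n$ and append the resulting $\varphi$ and $\psi$ as functions of the variable. Put $\sigma\coloneqq(\theta_n)_{n<\omega}$; its first block is $\mathrm{id}_G$, so $\sigma$ is injective, its inverse is the projection to the first block, and we may extend $\sigma$ to a total function $G\to\M^{\omega}$ retaining this property. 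Let $G_2\coloneqq\sigma(G)=\{b:(b)_0\in G,\ \sigma((b)_0)=b\}$, a type-definable set over $N$, carrying the multiplication $\odot$ induced by $\sigma$; then $\sigma\colon G\to G_2$ is an $N$-definable group isomorphism, so it maps $p$ to the principal generic type of $G_2$. For generic independent elements $a=\sigma(g)$ and $b=\sigma(h)$ of $G_2$ (so $g,h\models p$ and $g\indL_N h$), we have $a\odot b=\sigma(g\cdot h)$, and each coordinate of this tuple is a coordinate of some $\theta_n(g\cdot h)$, hence lies in $N\bigl(\theta_{n+1}(g),\theta_{n+1}(h)\bigr)\subseteq N(\sigma(g),\sigma(h))=N(a,b)$. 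Thus $a\odot b\in N(a,b)$, and Lemma~\ref{LemmaRationalInverse} completes the proof.

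The main obstacle --- and the reason a single application of Property~\ref{Property1} is not enough --- is that to present the \emph{whole} tuple $\sigma(g\cdot h)$ as a rational function of $\sigma(g)$ and $\sigma(h)$ one must absorb, at every stage, the finite data needed to express the coordinates added so far; the countable iteration is precisely what closes this off, and it is also why $G_2$, and hence $G_1$, is in general only type-definable even when $G$ is definable. The remaining points should be routine: that forking independence passes to the definable closures, that the relevant $\dcl_{\lingua}$-data is captured by $\lingua$-definable-over-$N$ functions defined on all of $p(\M)$ (so that the rational identities, being over $N$, hold for every generic independent pair), and that the injective image $\sigma(G)$ is a type-definable group isomorphic to $G$ over $N$.
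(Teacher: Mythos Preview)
Your argument is correct as a proof of the Proposition as literally stated, but it takes a genuinely different route from the paper's and loses a property that is essential downstream.

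The paper avoids your countable iteration by passing to an auxiliary $|N|^+$-saturated model $N_1\succ N$ and considering the field $N_1(g\bullet)\coloneqq N_1(\{g\cdot h: h\in G(N_1)\})$ for $g\models p_{\mid N_1}$. A compactness argument shows this field is finitely generated over $N_1$, say by $l_1(g),\ldots,l_m(g)$ with the $l_i$ $N$-definable. One then sets $L(x)=(x,l_1(x),\ldots,l_m(x))$ and, crucially, proves directly that the induced multiplication on $L(G)$ is already generically rational: first $N_1(L(g))=N_1(L(g)*L(h))$ for $h\in G(N_1)$ generic, and then two linear-disjointness computations using Fact~\ref{FactResultsChatzidakis}~(\ref{ItemLinearDisjoint}) squeeze $L(g)*L(h)$ first into $N(L(g),\dcl_\lingua(N,L(h)))$ and finally into $N(L(g),L(h))$. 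Thus a \emph{finite} tuple of coordinates suffices, and $\tilde G=L(G)$ lives in $M^{m+1}$.

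Your construction sidesteps the auxiliary model and the linear-disjointness tricks entirely, which is pleasantly direct; but the price is that $\sigma$ has infinitely many coordinates and $G_1\subset M^\omega$. This is not innocuous: the proof of Theorem~\ref{TheoremMain} takes the Zariski closure in $\M_0$ of the realizations of the generic type to obtain an irreducible \emph{variety} and then invokes Weil's theorem (Fact~\ref{TheoremWeil}), which requires a finite-dimensional object. With your $G_1$ that step does not go through. So while your iteration proves the Proposition, it does not deliver what the Proposition is actually used for; the finiteness obtained via $N_1(g\bullet)$ and the linear-disjointness claims is the real content of the paper's argument.
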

	
	\begin{proof}
			 By Property \ref{Property1}, we have for elements $g$ and $h$ of $G$ that
		\[g \indL_{N} h \quad \Rightarrow \quad g\cdot h \in N(\dcl_{\lingua}(N,g),\dcl_{\lingua}(N,h)) \tag{$\star$} \]
		Choose an auxiliary model $N_1$ containing $N$ which is $\vert N \vert^+$ saturated. Let $p$ be the principal generic type of $G$ over $N$ and $g$ a realization of $p_{\mid N_1}$.   We consider the following field extension:
		\[N_1(g\bullet) \coloneqq N_1(\{ g \cdot h \mid h  \in  G(N_1)  \}).\]
		Note that $g \indL_{N} h$ for every $h$ in $G(N_1)$. By ($\star$), the field $N_1(g\bullet)$ is thus contained in $N_1(\dcl_{\lingua}(N,g))$.
	
		We will now argue that the field extension $N_1(g\bullet)/N_1$ is finitely generated: By compactness and $(\star)$, there are natural numbers $n$, $k$ and $m_i$ (for $i=1,\ldots ,k$) as well as definable functions (which can be assumed to be defined on all of $G$) $f_1, \ldots, f_n$ and $g_{i,1}, \ldots g_{i,m_i}$ (for $i=1,\ldots ,k$) with parameters from $N$ such that for every $g'\models p$ and every element $h'$ in $G$ we have
		\[g' \indL_N h' \quad \Rightarrow \quad \models \bigvee_{i=1}^k g'\cdot h' \in  N(f_1(g'), \ldots f_n(g'),g_{i,1}(h'),\ldots g_{i,m_i}(h')). \]
		Indeed this condition is expressible in $\lingua$ since being in the generated field is witnessed by rational functions.

		 As $N_1$ is $\lingua$-definably closed, it follows that 
		\[N_1(g\bullet) \subset N_1(f_1(g),\ldots,f_n(g)) \subset N_1(\dcl_{\lingua}(N,g))\]
		is  finitely generated. Therefore let $l_1, \ldots, l_m$ be definable functions with parameters from $N$ such that $N_1(g\bullet)=N_1(g,l_1(g),\ldots,l_m(g))$. By letting their value be $0$ otherwise, we can assume that the $l_i$ are defined on all of $G$ for $i=1,\ldots,m$.
		
		We define $L: G \to M^{m+1}$ as $L(h)=(h,l_1(h),\ldots,l_m(h))$ and denote the image $L(G)$ as $\tilde{G}$. Since $L$ is definable over $N$ and $G$ is (type-)definable over $N$, the same holds for $\tilde{G}$. Clearly, the map $L$ is injective and the inverse map (given as a projection) also definable. Equipping $\tilde{G}$ with the multiplication induced by $L$ (i.e. $(h_0,\ldots,h_m)*(h_0',\ldots,h_m')\coloneqq L(h_0\cdot h_0') $), we thus turn $L$ into a definable isomorphism of groups. In particular, the principal generic type of $\tilde{G}$ is given as $q\coloneqq L(p)$.
		
		Applying Lemma \ref{LemmaRationalInverse} to $(\tilde{G},*)$, it is enough to show that we have for realizations $\gq$ and $\hq$ of $q$ that 
		\[\gq \indL_N \hq \quad \Rightarrow \quad \gq*\hq \in N(\gq,\hq) \tag{$\star \star$}\]
		The proof proceeds now  as in \cite[Propostion 3.1]{BD02GroupsSCF} and is done in several steps:
		
		\textbf{Claim 1:} Given a realization $\gq$ of $q_{\mid N_1}$ and a realization $\hq$ of $q$ in $\tilde{G}(N_1)$ we have that $N_1(\gq)=N_1(\gq*\hq)$.
		
		\textit{Proof of Claim 1:} First note that such an element $\hq$ can be found by $\vert N \vert^+$-saturation of $N_1$. By definition of $q$ we get that $\gq=L(g)$ and $\hq=L(h)$ with $g \models p_{\mid N_1}$ and $h \models p$. 
		Since $p$ is the principal generic type of $G$, it follows in particular that $g$ and $h$ are elements of $G^0$.
		Using $g \indL_N h$ we get that $g\cdot h \models p_{\mid N_1}$.  Hence:
		\[N_1(\gq)=N_1(g\bullet) \stackrel{\text{Def. of $N_1(g\bullet)$}}{=}N_1(g\cdot h\bullet)=N_1 (L(g\cdot h))=N_1(L(g)*L(h))=N_1(\gq*\hq) \hfill\qed_{\text{Claim } 1}\]
		
		\textbf{Claim 2:} Given two realizations $\gq$ and $\hq$ of $q$ that are independent over $N$ we have that $\gq*\hq \in N (\gq,\dcl_{\lingua}(N,\hq))$.
		
		\textit{Proof of Claim 2:} By stationarity of $q$ we can show this property for a specific pair of realizations, namely a realization $\gq$ of $q_{\mid N_1}$ and a realization $\hq$ of $q$ in $\tilde{G}(N_1)$.  By Claim 1 we get that $\gq * \hq \in N_1(\gq)$ and by $(\star)$ (using $L$) that $\gq*\hq \in N(\dcl_{\lingua}(N,\gq),\dcl_{\lingua}(N,\hq))$.
		
		Now, by Fact \ref{FactResultsChatzidakis} (\ref{ItemLinearDisjoint})  we get from $\gq \indL_N N_1$ that $N(\dcl_{\lingua}(N,\gq)) \indLd_N N_1$. By monotonicity and symmetry, since $\dcl_{\lingua}(N,\hq) \subset N_1$, this implies
		\[N(\dcl_{\lingua}(N,\gq),\dcl_{\lingua}(N,\hq)) \indLd_{N(\gq,\dcl_{\lingua}(N,\hq))} N_1(\gq),\]
		which thus yields that $\gq*\hq \in N(\gq,\dcl_{\lingua}(N,\hq))$ as desired.\hfill \qed$_{\text{Claim } 2}$
				
		\textbf{Claim 3:} The property $(\star \star)$ holds for  realizations $\gq$ and $\hq$ of $q$ .
		
		\textit{Proof of Claim 3:} By stationarity and  Claim 2 we get that if $\gq \indL_N \hq$, then $\gq*\hq $ lies in $N(\gq,\dcl_{\lingua}(N,\hq)) \cap N(\hq, \dcl_{\lingua}(N,\gq))$. By Fact \ref{FactResultsChatzidakis} (\ref{ItemLinearDisjoint}) and monotonicity, the independence $\gq \indL_N \hq$ implies that
		\[ N(\hq, \dcl_{\lingua}(N,\gq)) \indLd_{N(\gq,\hq)} N(\gq,\dcl_{\lingua}(N,\hq))\]
		and thus $\gq*\hq \in N(\gq,\hq)$. \qed$_{\text{Claim } 3}$
		
		This completes the proof of the proposition.
	\end{proof}

	A careful analysis of the above proof shows that we actually do not need full control over the definable closure as in Property \ref{Property1}: What we really use is that for our given (type-)definable group $(G,\cdot)$ defined over $N$, the product $a\cdot b$ of any two $N$-independent elements $a$ and $b$ lies in $N(\dcl_{\lingua}(a,N),\dcl_{\lingua}(b,N))$.

	Given a generically rational connected group, we can view the realizations of the principal generic type inside the model $\M_0$ of $ACF_p$ and apply Weil's Theorem, yielding the following result:
	\begin{theorem}(see \cite[Proposition 4.2]{BD02GroupsSCF})\label{TheoremMain}
		Suppose that $T$ is a stable theory of fields such that Property \ref{Property1} holds. Every type-definable connected group $(G,\cdot)$ which is definable over a small model $N$ inside the sufficiently saturated model $\M$ of $T$ is definably isomorphic over $N$ to a subgroup of the $M$-points $H(M)$ of an algebraic group $H$ (which is also defined over $N$).
	\end{theorem}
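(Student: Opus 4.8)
The plan is to combine the two main tools already developed: Proposition~\ref{PropositionGenericallyRational}, which replaces $(G,\cdot)$ by a definably isomorphic generically rational group, and Weil's theorem on pre-groups (Fact~\ref{TheoremWeil}), which produces the algebraic group. So the proof will have three movements: reduce to the generically rational case, verify that the generic of such a group gives a pre-group over $N$ when viewed inside $\M_0$, and then transport the group structure back along Weil's birational isomorphism using Fact~\ref{FactEmbeddingFromGenericCorrespondence}.

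First I would apply Proposition~\ref{PropositionGenericallyRational} to assume without loss of generality that $(G,\cdot)$ is generically rational over $N$: for $N$-independent realizations $a,b$ of the principal generic type $p$ we have $a\cdot b\in N(a,b)$ and $a^{-1}\in N(a)$. Let $V$ be the locus of $p$ over $N$ inside $\M_0$, i.e. the $N$-irreducible variety whose unique generic type over $N$ is $\tp_0(a/N)$; by Remark~\ref{RemarkSettingFields}(\ref{ItemRegular}) the extension $N(a)/N$ is regular (since $N\prec M$ is a model and $M/N$ is regular), so $V$ is absolutely irreducible by Fact~\ref{FactAbsolIrred}. Now I would check that $V$, together with the rational map $f$ induced by $\cdot$ (the rational function over $N$ witnessing $a\cdot b\in N(a,b)$), is a pre-group over $N$ in the sense of Definition~\ref{DefPregroup}. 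Condition (G1) amounts to $N(a,b)=N(a,a\cdot b)=N(b,a\cdot b)$ for independent generics $a,b$; this follows from genericity of the product (in a stable group the product of two independent generics is again generic, and interchanging roles $a=(a\cdot b)\cdot b^{-1}$, $b=a^{-1}\cdot(a\cdot b)$ together with $a^{-1}\in N(a)$, $b^{-1}\in N(b)$ and generic independence of $\{a\cdot b,b\}$ etc.). Condition (G2), associativity on independent generic triples, is inherited directly from associativity of $\cdot$ in $G$, noting that for independent generic $a,b,c$ all the relevant products land in the appropriate rational function fields. One subtlety to spell out: $a,b,c$ being independent generics of $V$ over $N$ \emph{in the $ACF_p$-sense} must be matched with independence in the sense of $T$; here Fact~\ref{FactResultsChatzidakis}(\ref{ItemLinearDisjoint}) (equivalence of $\indL$ and $\indA$ over models for $\dcl_\lingua$-closed sets) and the fact that generics of $G$ correspond to generic points of $V$ bridge the two notions.

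Next I would invoke Fact~\ref{TheoremWeil}: there is a connected algebraic group $H$ defined over $N$ and a birational isomorphism $h:V\dashrightarrow H$ defined over $N$ with $h(f(x,y))=h(x)*h(y)$ for generic independent $x,y$. Then $q\coloneqq h_*p$ is a type over $N$ concentrated on $H$, realized by elements of $H(M)$ since $h$ is defined over $N\subset M$ and maps $M$-points to $M$-points (on the open set where it is defined, which contains realizations of $p$ as $p$ is generic). This $q$ is closed under generic multiplication in $H(M)$ — more precisely in the object of $T^{\mathrm{eq}}$ given by $H(M)$, as discussed after Fact~\ref{FactEmbeddingFromGenericCorrespondence}, since $T^{\mathrm{eq}}$ is stable — because the intertwining identity shows $c\circ d$ realizes $h_*(\text{generic of }G)$ over each of $Nc$, $Nd$. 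Finally I would apply Fact~\ref{FactEmbeddingFromGenericCorrespondence} with $G$ connected of principal generic $p$, the target the stable object $H(M)$, the type $q$, and the $N$-definable map $h$ taking realizations of $p$ bijectively to those of $q$ (bijectivity holds generically since $h$ is a birational \emph{iso}, and on generics it is a genuine bijection); this extends $h$ to an $N$-definable isomorphism $F:G\to\Stab_{H(M)}(q)$, and $\Stab_{H(M)}(q)$ is a subgroup of $H(M)$. Composing with the isomorphism of Proposition~\ref{PropositionGenericallyRational} gives the desired $N$-definable embedding of the original $G$ into $H(M)$.

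The main obstacle I anticipate is the bookkeeping in verifying (G1) for the pre-group, and more generally keeping the two independence notions ($\indL$ in $T$ versus algebraic independence $\indA$ in $ACF_p$) and the two notions of genericity (model-theoretic in $G$ versus generic points of $V$) aligned throughout. Concretely, to get (G1) one needs that $\{a\cdot b, b\}$ is $ACF_p$-independent and $a\cdot b$ is a generic point of $V$, which requires translating the stable-group statement ``$a\cdot b$ is generic over $Nb$ when $a\indL_N b$'' into the $ACF_p$ world via Fact~\ref{FactResultsChatzidakis} and the correspondence of generics; one must also ensure $h$ is defined at the relevant tuples, which again uses that these tuples realize the generic type and $h$ is defined on a dense open set. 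A secondary point needing care is the passage to $T^{\mathrm{eq}}$ when $T$ lacks elimination of imaginaries, so that $H(M)$ can legitimately serve as the target group $H$ in Fact~\ref{FactEmbeddingFromGenericCorrespondence}; this is routine given stability of $T^{\mathrm{eq}}$ but should be mentioned explicitly.
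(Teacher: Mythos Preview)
Your proposal is correct and follows exactly the same route as the paper's proof: reduce to the generically rational case via Proposition~\ref{PropositionGenericallyRational}, observe that the Zariski locus $V$ of $p$ is absolutely irreducible by Remark~\ref{RemarkSettingFields}(\ref{ItemRegular}) and Fact~\ref{FactAbsolIrred}, check that generic rationality makes $V$ a pre-group, apply Weil's theorem (Fact~\ref{TheoremWeil}), and transfer back via Fact~\ref{FactEmbeddingFromGenericCorrespondence}. In fact you spell out considerably more detail than the paper does (the verification of (G1), the alignment of $\indL$- and $\indA$-independence via Fact~\ref{FactResultsChatzidakis}, and the passage to $T^{\mathrm{eq}}$), all of which is handled correctly.
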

	\begin{proof} The proof is a verbatim adaption of the proof of \cite[Propostion 4.2]{BD02GroupsSCF} to our general setting: 	
		By Proposition \ref{PropositionGenericallyRational}, we may assume that $G$ is generically rational and let $p$ be its generic type. The Zarisiki-closure $V$ in $\M_0$ of the set of all realizations of $p$ is an irreducible variety by Fact \ref{FactAbsolIrred}  (and Remark \ref{RemarkSettingFields} (\ref{ItemRegular})). By generical rationality of $G$, the multiplication induces a pre-group on $V$. Hence we can apply Weil's Theorem  (Fact \ref{TheoremWeil}) and extend the birational correspondence in $\M$ to a definable embedding using Fact \ref{FactEmbeddingFromGenericCorrespondence}.
	\end{proof}
	
		\begin{remark}\label{RemarkConnectedByFinite}
		Note that if $G$ is connected by finite, an embedding of the connected component $G^0$ into an algebraic group also yields an embedding of $G$ into an algebraic group (see the first part of the proof of \cite[Proposition 4.9]{BD02GroupsSCF}).
	\end{remark}

	\begin{remark} \label{RemarkNotOnlyModels}
		Suppose that Property \ref{Property1} and Fact \ref{FactResultsChatzidakis} (\ref{ItemLinearDisjoint}) hold for $T$ not only over models, but over arbitrary $\lingua$-algebraically closed sets and moreover that types in $T$ over real algebraically closed sets are stationary. Since Remark \ref{RemarkSettingFields} (\ref{ItemRegular}) and (\ref{ItemStationary}) were formulated over such sets, the proofs of Proposition \ref{PropositionGenericallyRational} and Theorem \ref{TheoremMain} remain valid and thus we obtain for a group $G$ defined over a set $A=\acl_{\lingua}(A)$ a definable embedding over $A$ into an algebraic group.
	\end{remark}

	\section{Examples}\label{SectionExamples}
	In this section we show that Theorem \ref{TheoremMain} applies to various stable theories of fields. For the necessary field theoretic notions we refer to Subsection \ref{SubsectionFieldTheory}.
	\subsection*{Separably closed fields of infinite degree of imperfection} In the language $\lingua=\lingua_0$, the theory $T=SCF_{p,e}$ of separably closed fields of characteristic $p$ and degree of imperfection $e$ in $\setN\cup \{\infty\}$ (see Fact \ref{FactImpDeg}) is complete and stable, as was shown by Er\v{s}ov in \cite{E87Fields}.
	
	\begin{remark}
		The theory $T$ satisfies Property \ref{Property1}. To see this, we follow the strategy outlined in Lemma \ref{LemmaStrategyP1} in Subsection \ref{SubsectionDclProp1}: Given a sufficiently saturated model $\M$ of $T$, we let 
		\[\mathcal{K}=\{ A \mid A \text{ subfield of } M \text{ such that } M/A \text{ separable }\}\]
		which is a strong class by Er\v{s}ov's proof of completeness. By Remark \ref{RemarkP1WhenSeparable}, we only need check Hypothesis \ref{HypCharAcl}. Since $\lingua=\lingua_0$, the hypothesis follows from the characterization of algebraic closure in separably closed fields (see \cite[p. 24]{D88Ideaux}). 
		\end{remark}
	
	Note that the characterization of independence given by Srour in \cite{S86Independence} yields that Fact \ref{FactResultsChatzidakis} (\ref{ItemAclSeparable}) and thus also Property \ref{Property1} hold over arbitrary (real) algebraically closed sets. Moreover, types over such sets are stationary (see e.g. \cite[Corollary 2.6]{Bartnick}). In particular,  Remark \ref{RemarkNotOnlyModels} applies and yields that the result of Bouscaren and Delon \cite{BD02GroupsSCF} also holds for infinite degree of imperfection:
	\begin{cor}
		Let $(G,\cdot)$ be a connected type-definable group defined over an $\lingua$-algebraically closed subsets $A$ of the model $\M$ of $SCF_{p,\infty}$. Then $G$ definably embeds over $A$ in the $M$-points of an algebraic group $H$ which is also defined over $A$.
	\end{cor}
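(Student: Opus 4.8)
The plan is to verify the hypotheses of Remark \ref{RemarkNotOnlyModels} for $T = SCF_{p,\infty}$, so that the embedding result follows for groups defined over $\lingua$-algebraically closed sets and not merely over models. Concretely, we need three ingredients over arbitrary $\lingua$-algebraically closed $A$ (rather than just small models $N$): first, the analogue of Fact \ref{FactResultsChatzidakis} (\ref{ItemLinearDisjoint}), namely that $A \indL_A B$ together with separability of $M/A$ and $M/B$ yields $A \indLd_A B$; second, the analogue of Property \ref{Property1}, i.e. $\dcl_{\lingua}(A,B) \subseteq A \cdot B$ under the same hypotheses; and third, stationarity of $\lingua$-types over real algebraically closed sets.

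First I would recall Srour's characterization of non-forking independence in $SCF_{p,e}$ from \cite{S86Independence}: for subfields $A \subseteq B$, $A \subseteq C$ with the appropriate separability conditions, $B \indL_A C$ is equivalent to $B \indLd_A C$ (equivalently $B \indA_A C$), and moreover $B \cdot C$ is then a separable extension of both $B$ and $C$. The key point is that Srour's proof works over arbitrary subfields $A$ with $M/A$ separable, not just over models; since $\lingua = \lingua_0$ here, $\lingua$-definably closed sets are exactly the subfields over which $M$ is separable, and $\lingua$-algebraically closed sets are exactly the separably closed (equivalently, by Fact \ref{FactRegularCharacterization} combined with Remark \ref{RemarkSettingFields}, relatively field-theoretically algebraically closed) such subfields. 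This gives the generalization of Fact \ref{FactResultsChatzidakis} to real algebraically closed base sets. For Property \ref{Property1} over such sets: given $A = \dcl_{\lingua}(A)$, $B = \dcl_{\lingua}(B)$ with $A \indL_A B$, Srour's result gives that $M/(A \cdot B)$ is separable, hence $A \cdot B$ is $\lingua$-definably closed (as $\lingua = \lingua_0$, so closure under the $\lambda$-functions is exactly separability, cf. Fact \ref{FactGeneralizedLambdaFunctions}); since $A \cdot B \subseteq \dcl_{\lingua}(A,B)$ always and $\dcl_{\lingua}(A,B)$ is the smallest definably closed set containing $A \cup B$, we conclude $\dcl_{\lingua}(A,B) = A \cdot B$. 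Stationarity of types over real algebraically closed sets for $SCF_{p,\infty}$ is \cite[Corollary 2.6]{Bartnick}.

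With these three facts in hand, Remark \ref{RemarkNotOnlyModels} applies directly: given the connected type-definable group $(G,\cdot)$ defined over $A = \acl_{\lingua}(A)$, the proofs of Proposition \ref{PropositionGenericallyRational} and Theorem \ref{TheoremMain} go through verbatim with the small model $N$ replaced by $A$ throughout — one replaces the auxiliary $\vert N\vert^+$-saturated model $N_1$ by a $\vert A\vert^+$-saturated $\lingua$-algebraically closed set $A_1 \supseteq A$ (which exists by saturation of $\M$), uses stationarity of $\tp_{\lingua}$ over real algebraically closed sets in place of stationarity over models, and invokes the generalized forms of Property \ref{Property1} and Fact \ref{FactResultsChatzidakis} just established. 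This produces a definable isomorphism over $A$ from $G$ to a generically rational connected group, whose principal generic type Zariski-closes to an absolutely irreducible variety $V$ over $A$ (using Fact \ref{FactAbsolIrred} together with Remark \ref{RemarkSettingFields} (\ref{ItemRegular})); generic rationality makes $V$ a pre-group, Weil's theorem (Fact \ref{TheoremWeil}) yields an algebraic group $H$ defined over $A$ birationally equivalent to $V$, and Fact \ref{FactEmbeddingFromGenericCorrespondence} upgrades the generic correspondence to a definable embedding of $G$ into $H(M)$ over $A$.

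The main obstacle is the bookkeeping in the first paragraph: one must be careful that Srour's independence characterization, which is typically stated and most cleanly used over models or over $\lingua_0$-algebraically closed base fields, genuinely holds in the form needed over arbitrary $\lingua$-algebraically closed $A$ — i.e. that the separability of $M/A$ (guaranteed by $A$ being $\lingua$-definably closed, Remark \ref{RemarkSettingFields} (\ref{ItemSeparable})) is exactly the hypothesis Srour requires, and that the resulting linear disjointedness and separability of $A \cdot B$ do not secretly need $A$ to be a model. Once one confirms that Srour's arguments are insensitive to this distinction (which the author asserts, citing \cite{S86Independence}), everything else is a transcription of already-proved material.
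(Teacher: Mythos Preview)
Your proposal is correct and matches the paper's approach exactly: use Srour's independence characterization to obtain Fact \ref{FactResultsChatzidakis} (and hence Property \ref{Property1}) over arbitrary $\lingua$-algebraically closed bases, cite \cite[Corollary 2.6]{Bartnick} for stationarity, and apply Remark \ref{RemarkNotOnlyModels}. Two cosmetic points: your notation $A \indL_A B$ conflates the fixed $\acl_{\lingua}$-closed base with one of the definably closed sets sitting above it, and the auxiliary $N_1$ in Proposition \ref{PropositionGenericallyRational} can simply remain a $\vert A\vert^+$-saturated elementary submodel containing $A$ --- there is no need (and no clear meaning) in weakening it to a ``saturated algebraically closed set''.
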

	
	\begin{remark}
		Separably closed fields of finite imperfection degree $e$ also fit in our setting, when studied in the language $\lingua=\lingua_0\cup \{a_1,\ldots,a_e\}$ with constants for a $p$-basis. Indeed, the quantifier elimination results of Delon \cite{D88Ideaux} yield that \[\mathcal{K}=\{ A \mid A\, \,\lingua\text{-substructure such that } M/A \text{ separable}\}\] is a strong class and the Hypotheses \ref{HypDclInK} - \ref{HypIndepInK} are easily verified, so Property \ref{Property1} holds. 
		
		Note that in the given language any algebraically closed subset is already a model.
	\end{remark}

	\subsection*{Differential fields of positive characteristic}
	Recall that a derivation on a field $K$ is an additive homomorphism $D$ such that $D(a\cdot b)=D(a) \cdot b+ a \cdot D(b)$. The field $(K,D)$ is then called a \textit{differential field} and the constants are the elements of the subfield $C_K$ given by the kernel of $D$. A differential field of positive characteristic $p$ is \textit{differentially perfect} if $C_K=K^p$, or equivalently, if every differential field extension is separable. The theory of existentially closed differentially perfect fields, called $DCF_p$ was investigated by Wood \cite{Wood1, Wood2, Wood3} and Shelah \cite{ShelahDCF} and shown to be complete and stable.
	
	More generally, the \textit{differential degree of imperfection} of the differential field $K$ (of positive characteristic $p$) is the degree of imperfection of $C_K$ over $K^p$. Similarly to the perfect case, Ino and Léon Sánchez showed in \cite{IS23differentially} that there exists a theory of \textit{separably differentially closed} fields (i.e. fields that are existentially closed in every separable differential field extension) for every fixed differential degree of imperfection. Their underlying fields are separably closed fields of infinite degree of imperfection. 
	
	For differential degree of imperfection $0<e<\infty$, we work in the language $\lingua_{\aq }= \lingua_0\cup\{D\}\cup \{a_1, \ldots, a_e\}$ with a unary function symbol interpreted as the derivation and constants for a differential $p$-basis (i.e. a $p$-basis of $C_K$ over $K^p$). For $e=0$ or $e=\infty$ we can omit the constants and consider the language $\lingua= \lingua_0\cup\{D\}$ of differential fields.
	 Fixing a characteristic $p$ and a differential imperfection degree $e$ in $\setN\cup \{\infty\}$, the class of separably differentially closed fields yields a complete and stable $\lingua$-theory (resp. $\lingua_{\aq}$-theory) denoted $T=SDCF_{p,e}$. The case  $e=0$ coincides with $DCF_p$.

	\begin{lemma}\label{LemmaP1DCF}
		The theory $T$ satisfies Property \ref{Property1}.
	\end{lemma}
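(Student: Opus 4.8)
The plan is to apply Lemma~\ref{LemmaStrategyP1}: I will exhibit a strong class $\mathcal{K}$ satisfying Hypotheses~\ref{HypDclInK}--\ref{HypIndepInK}. Mirroring the class used for $SCF_{p,e}$, take
\[\mathcal{K}=\{\, A \mid A \text{ an $\lingua$-substructure of } \M \text{ such that } \M/A \text{ is separable as fields} \,\},\]
where $\lingua$ is the ambient language (with constants for a differential $p$-basis when $0<e<\infty$, and $\lingua_0\cup\{D\}$ otherwise). First I would recall that $\mathcal{K}$ is strong: by the quantifier-elimination analysis of Ino and León Sánchez \cite{IS23differentially} — recovering Wood's \cite{Wood1} description of $DCF_p$ for $e=0$ — any $\lingua$-isomorphism between $\lingua$-substructures $A,A'$ of $\M$ over which $\M$ is separable is elementary; this is the appropriate form of model completeness here, such substructures being automatically closed under the field-definable generalized $\lambda$-functions.

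Next I would check that we are essentially in the setting of Remark~\ref{RemarkP1WhenSeparable}. The only new \emph{function} symbol is the unary derivation $D$, which is an operator in the sense of \cite{MS14operators} (it is additive and $D(xy)=D(x)\cdot y+x\cdot D(y)$ has the required shape, with $I=\{0,1\}$), while the finitely many extra constants $a_1,\ldots,a_e$ present when $0<e<\infty$ lie in every $\lingua$-substructure and so play no role; moreover $T$ is by construction a theory of separably closed fields. This yields Hypotheses~\ref{HypDclInK}, \ref{HypSeparable} and~\ref{HypIndepInK}: Hypothesis~\ref{HypDclInK} is Remark~\ref{RemarkSettingFields}(\ref{ItemSeparable}); Hypothesis~\ref{HypSeparable} is immediate from the definition of $\mathcal{K}$ and the construction of $T$; and for Hypothesis~\ref{HypIndepInK}, given $A,B\in\mathcal{K}$ with $N\subseteq A,B$ and $A\indL_N B$, the compositum $A\cdot B$ is again a differential subfield (Leibniz rule) containing $N$, with $\M/A\cdot B$ separable by Fact~\ref{FactResultsChatzidakis}(\ref{ItemAclSeparable}).

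The remaining step, Hypothesis~\ref{HypCharAcl}, is the one I expect to be the real obstacle. Fix $A\in\mathcal{K}$; by Remark~\ref{RemarkAclIsSepClosure} we have $\acl_0(A)\cap\M=A^{sep}$, the relative separable closure of $A$ in $\M$. I must show $\acl_{\lingua}(A)=A^{sep}$ and that every field automorphism $f$ of $A^{sep}$ fixing $A$ pointwise is an $\lingua$-automorphism. The latter is soft: a derivation extends uniquely along a separable algebraic extension, so $fDf^{-1}$ is a derivation on $A^{sep}$ extending $D\upharpoonright_{A}$ and therefore equals $D\upharpoonright_{A^{sep}}$; as $f$ also fixes the constants $a_i\in A$, it is an $\lingua$-automorphism, and once $\acl_{\lingua}(A)=A^{sep}$ is known this set is $\lingua$-definably closed, hence lies in $\mathcal{K}$ by Hypothesis~\ref{HypDclInK}, so $f$ is elementary by strongness. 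For the equality $\acl_{\lingua}(A)=A^{sep}$: the inclusion $A^{sep}\subseteq\acl_{\lingua}(A)$ is Remark~\ref{RemarkSettingFields}(\ref{ItemZusammenhangDerAcls}), and $\acl_{\lingua}(A)$ is relatively algebraically closed in $\M$ since $\M/\acl_{\lingua}(A)$ is regular (Remark~\ref{RemarkSettingFields}(\ref{ItemRegular})); so it remains to rule out that some $b\in\M$ transcendental over $A$ lies in $\acl_{\lingua}(A)$. I would extract this from the description of the model-theoretic algebraic closure in $SDCF_{p,e}$ given in \cite{IS23differentially}, and I regard it as the crux; absent a ready reference, it would call for a separate argument producing infinitely many $\lingua$-conjugates of such a $b$ over $A$ inside the saturated model $\M$, using the non-algebraicity of the relevant generic (differential) type together with strongness of $\mathcal{K}$. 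With Hypotheses~\ref{HypDclInK}--\ref{HypIndepInK} established, Lemma~\ref{LemmaStrategyP1} delivers Property~\ref{Property1}.
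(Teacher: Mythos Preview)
Your proposal is correct and follows essentially the same route as the paper: the same class $\mathcal{K}$, the same appeal to Remark~\ref{RemarkP1WhenSeparable} to reduce to Hypothesis~\ref{HypCharAcl}, and the same observation that unique extension of derivations along separable algebraic extensions handles the second clause of that hypothesis. The one point where the paper is more precise is the equality $\acl_{\lingua}(A)=A^{sep}$: rather than citing \cite{IS23differentially} (which does not state this directly), the paper invokes \cite[Lemma~3.8]{LM24indep} and, in the appendix, spells out the argument by taking a model $N\supseteq A$, building an $\lingua$-isomorphic copy $N'$ with $N'\indA_A N$ (via amalgamation of separable differential extensions and model completeness), and concluding $\acl_{\lingua}(A)\subseteq N\cap N'\subseteq\acl_0(A)\cap M$---this is the concrete realization of your ``infinitely many conjugates'' sketch.
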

	\begin{proof}
		Given a sufficiently saturated model $\M$ of $T$, we let 
		\[\mathcal{K}=\{ A \mid A \text{ differential subfield of } M \text{ such that } M/A \text{ separable }\}\]
		if $e=0$ or $e=\infty$ and
		\[\mathcal{K}=\{ A \mid A \,\,\lingua_{\aq}\text{-substructure of } M \text{ such that } M/A \text{ separable }\}\]
		otherwise. Note that in the last case, the class $\mathcal{K}$ is given by exactly those differential subfields $A$ of $M$ such that the differential $p$-basis $\aq$ of $M$ is also a $p$-basis of $A$ (and in particular $K$ is also of differential imperfection degree $e$). In each case we obtain a strong class since adding the definable $\lambda$-functions that guarantee that all substructures are in $\mathcal{K}$ yields a theory with quantifier elimination by \cite[Theorem 6.3 and 6.6]{IS23differentially}.
		
		By Remark \ref{RemarkP1WhenSeparable}, it suffices to check Hypothesis \ref{HypCharAcl}. The Hypothesis follows from \cite[Lemma 3.8]{LM24indep}  using the fact that $T=SDCF_{p,e}$ is the model completion of the theory of differential fields of differential degree of imperfection $e$ which is almost derivation like (see \cite[Definition 3.1]{LM24indep}) with respect to the theory $T_0=ACF_p$ (in the language with $\lambda$-functions on the constants, i.e. differential $\lambda$-functions). In the appendix, we spell out the argument without using $\lambda$-functions.
	\end{proof}

	\begin{cor} Given $e$ in $\setN\cup \{\infty\}$, let $(G,\cdot)$ be a connected type-definable group defined over an elementary substructure $N$ of the model $\M$ of $ SDCF_{p,e}$. Then $G$ definably embeds over $N$ in the $M$-points of an algebraic group $H$ which is also defined over $N$.
	\end{cor}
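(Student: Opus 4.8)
The plan is to obtain the corollary as a direct instance of Theorem \ref{TheoremMain}, so the only real task is to verify that $T = SDCF_{p,e}$ meets its hypotheses. First I would recall that, for every characteristic $p$ and every differential degree of imperfection $e \in \setN \cup \{\infty\}$, the theory $SDCF_{p,e}$ is a complete, stable $\lingua$-theory of fields with $\lingua_0 \subseteq \lingua$ in the fixed characteristic: here $\lingua = \lingua_0 \cup \{D\}$ when $e \in \{0,\infty\}$ and $\lingua = \lingua_{\aq}$ otherwise, and both languages are countable. For $e = 0$ this is $DCF_p$, whose completeness and stability are due to Wood and Shelah; for general $e$ this is exactly the result of Ino and León Sánchez \cite{IS23differentially}. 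Thus the standing assumptions of Subsection \ref{SubsectionDefGroups} (a stable theory of fields in a countable language) are satisfied.

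Next I would invoke Lemma \ref{LemmaP1DCF}, which establishes that $SDCF_{p,e}$ satisfies Property \ref{Property1}. With this in hand, every hypothesis of Theorem \ref{TheoremMain} holds for $T = SDCF_{p,e}$.

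Finally, applying Theorem \ref{TheoremMain} to the connected type-definable group $(G,\cdot)$, which is definable over the small model $N \prec \M$, yields an algebraic group $H$ defined over $N$ together with a definable isomorphism over $N$ of $G$ onto a subgroup of $H(M)$ — precisely the assertion of the corollary. Since the substantive work was carried out in Lemma \ref{LemmaP1DCF} and in Theorem \ref{TheoremMain}, I do not expect any genuine obstacle at this stage; the one point to keep in mind is that, because the separably closed fields underlying $SDCF_{p,e}$ lack (weak) elimination of imaginaries, $H(M)$ need not be a definable set, so one works in $T^{eq}$ and views $H(M)$ as a quotient, exactly as explained after Fact \ref{FactEmbeddingFromGenericCorrespondence}. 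One may also note in passing that this simultaneously recovers the classical picture for $DCF_p$ and extends it to differential fields of arbitrary imperfection degree.
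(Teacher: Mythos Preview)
Your proposal is correct and matches the paper's approach exactly: the paper states the corollary without proof, treating it as an immediate consequence of Lemma~\ref{LemmaP1DCF} (Property~\ref{Property1} holds for $SDCF_{p,e}$) together with the stability of $SDCF_{p,e}$ and Theorem~\ref{TheoremMain}. Your additional remarks about working in $T^{eq}$ and the special case $e=0$ are accurate and consistent with the surrounding discussion in the paper.
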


	\begin{remark}\label{RemarkDerFrobenius}
		Given $n$ in $\setN$ and $p$ prime, Property \ref{Property1} and thus Theorem \ref{TheoremMain} also hold for the stable theory $Fr^n$-$DCF_p$, the model companion of the theory of a derivation of the $n$-th power of Frobenius (in short a $Fr^n$-derivation) which was introduced by Kowalski \cite{K05Frob} and studied also by Gogolok \cite{G23Frob}. A $Fr^n$-derivation $\delta$ is an additive map on a field $K$ of positive characteristic $p$ satisfying $\delta(x \cdot y)=x^{p^n} \delta(y)+y^{p^n} \delta(x)$.
		
		Indeed, an analogous reasoning as for $DCF_p$, using \cite[Lemma 2.1 and Theorem 2.4]{G23Frob} as well as \cite[Fact 1.3, Lemma 1.8 and Theorem 2.2 (i)]{K05Frob},  shows that Property \ref{Property1} holds.		
	\end{remark}
	
	\begin{remark}
		Clearly, the Property \ref{Property1} also holds for the theory $DCF_0$ of differentially closed fields of characteristic $0$: In this case the definable closure $\dcl_{\lingua}(A,B)$ of any two differential fields $A$ and $B$ is just the (differential) field generated by $A$ and $B$. We thus obtain an alternative to the proof given by Kowalski and Pillay in \cite{KP02groups} of the well-known result of Pillay \cite{PillayGroupsDCF} that differential algebraic groups embed into algebraic groups.
	\end{remark}
	We did not study the case of a differential field with several commuting derivations.
	
	\subsection*{Beautiful pairs of algebraically closed fields}
		In the language $\lingua=\lingua_0\cup\{P\}$ consider the theory $T=ACF_pP$ of proper pairs of algebrically closed fields of fixed characteristic $p$ (positive or $0$). Introduced by Poizat in \cite{P83Pairs} as an example of a theory of beautiful pairs, this theory is complete and $\omega$-stable.
		
		Definable groups in beautiful pairs were already characterized up to isogeny by Blossier and Martin-Pizarro in \cite{BMP14Groupes}.  In the special case of $T=ACF_pP$ their characterization also extends to interpretable groups. For instance such a group is isogeneous to a subgroup of an algebraic group. Alternatively, we can use our strategy to describe definable groups in $T$.
		
		\begin{remark}
			The theory $T$ satisfies Property \ref{Property1}.
			\begin{proof}
				We  work in a sufficiently saturated model $\M$ of $T$ and follow the strategy from Lemma \ref{LemmaStrategyP1}. Let \[\mathcal{K}=\{ A \mid A \text{ subfield of  } M \text{ with } M/A \text{ separable and } A \indLd_{P(A)} P(M)\}\]
				where $P(A)$ denotes the $P$-part of $A$ (in particular the subfields in $\mathcal{K}$ are $P$-independent in the terminology of \cite{BYPV03Pairs}). By \cite[Lemma 3.8]{BYPV03Pairs}, the class is strong.
				
				For Hypothesis \ref{HypDclInK} note that similar to the usual (generalized) $\lambda$-functions (see Definition \ref{DefLambdaFunctions}) in positive characteristic, we can also define functions in $\lingua$  that give for a tuple $\aq$, which is linearly independent over $P(M)$, and an element $b$ in the linear span of $\aq$ over $P(M)$ the unique coefficients in the  basis representation with respect to $\aq$. Hence, for any set $A$ we get that $\dcl_{{\lingua}}(A)$ is in $\mathcal{K}$.
				
				Hypothesis \ref{HypSeparable} holds by definition.
				
				For Hypothesis \ref{HypCharAcl} let $A$ be in $\mathcal{K}$.
				Since $A$ is $P$-independent, it follows from \cite[Lemma 2.6]{PV04Imaginaries} that $\acl_{\lingua}(A)=\acl_0(A)=\acl_0(A)\cap M$. Moreover, it follows from the definition of  $\mathcal{K}$ that $\acl_0(A) \indA_{P(A)} P(M)$ which implies that $P(\acl_0(A))=\acl_0(P(A))$ since the predicate is 	algebraically closed and thus the second part of Hypothesis \ref{HypCharAcl} also holds.

				Finally suppose that $N,A$ and $B$ are given as in Hypothesis \ref{HypIndepInK}. By Fact \ref{FactResultsChatzidakis}, the extension $M/A\cdot B$ is separable so it suffices to show \[A \cdot B \indLd_{P(A \cdot B)} P(M).\]
				
				 By \cite[Corollary 6.2]{PiZiEquation} the independence $A \indL_N B$ implies $A \cdot P(M) \indLd_{N\cdot P(M)} B \cdot P(M)$. 		
					Replacing non-forking independence with $\indLd$, the proof of \cite[Lemme 1.2]{BMP14Groupes} yields that \[A\cdot B \indLd_{P(A) \cdot P(B) } P(M) \]
			which implies $P(A \cdot B)= P(A) \cdot P(B) $ and in particular the desired independence.	
			\end{proof}
		\end{remark}

		Note that since $T$ is $\omega$-stable, every definable group is connected by finite. Remark \ref{RemarkConnectedByFinite} and Theorem \ref{TheoremMain} thus enable us to improve the result for definable groups from an isogeny to an embedding:
		\begin{cor}
			Let $(G,\cdot)$ be a definable group defined over an elementary substructure $N$ of the model $\M$ of $ACF_pP$. Then $G$ definably embeds over $N$ into an algebraic group $H$ which is also defined over $N$.
		\end{cor}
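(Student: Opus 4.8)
The plan is to reduce the statement to Theorem \ref{TheoremMain} by passing to the connected component. Since $T = ACF_pP$ is $\omega$-stable, every definable group is connected by finite; concretely, $G$ admits a definable connected component $G^0$ of finite index, and $G^0$ is type-definable (indeed definable) over $N$, because the connected component can be taken over any set over which $G$ is defined. Thus it suffices to treat $G^0$ and then transfer the conclusion back to $G$.

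Next I would apply the main theorem to $G^0$. By the preceding remark, $T = ACF_pP$ satisfies Property \ref{Property1}, and $T$ is a stable (in fact $\omega$-stable) theory of fields, so the hypotheses of Theorem \ref{TheoremMain} are met for the connected type-definable group $G^0$ over the small model $N$. The theorem then yields a definable isomorphism over $N$ from $G^0$ onto a subgroup of the $M$-points $H_0(M)$ of an algebraic group $H_0$, with $H_0$ defined over $N$.

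Finally I would invoke Remark \ref{RemarkConnectedByFinite}: since $G$ is connected by finite, an embedding over $N$ of the finite-index subgroup $G^0$ into an algebraic group produces an embedding over $N$ of $G$ itself into an algebraic group $H$ defined over $N$ (as in the first part of the proof of \cite[Proposition 4.9]{BD02GroupsSCF}), which is the desired conclusion. The argument is a short assembly of results already in hand, so no genuine obstacle arises at this stage; the real content sits in the preceding remark establishing Property \ref{Property1} for $ACF_pP$. The only thing to keep track of here is that definability over $N$ is preserved at each step — the connected component over $N$, the embedding of $G^0$ over $N$, and its extension to $G$ over $N$ — which is routine.
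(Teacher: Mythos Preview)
Your proposal is correct and follows exactly the paper's intended argument: the paper states just before the corollary that since $T$ is $\omega$-stable every definable group is connected by finite, and that Remark \ref{RemarkConnectedByFinite} together with Theorem \ref{TheoremMain} (and the preceding remark establishing Property \ref{Property1} for $ACF_pP$) yield the embedding. You have simply spelled out this one-line justification in full.
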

		The same result holds also for beautiful pairs of $DCF_0$ and beautiful pairs of separably closed fields of finite degree of imperfection. (More generally, the proof works for beautiful pairs of a stable ncfp theory $T_1$ of separably closed fields with quantifier elimination such that non-forking independence in $T_1$ is given by algebraic independence and such that $T_1$ satisfies Hypothesis \ref{HypCharAcl} and \ref{HypIndepInK} with respect to the class $\mathcal{K}$ of all substructures).

\vspace{1cm}
\section*{Appendix}
In the setting of Lemma \ref{LemmaP1DCF}, we spell out the proof that Hypothesis \ref{HypCharAcl} holds for the theory $T=SDCF_{p,e}$ of separably differentially closed fields of differential degree of imperfection $e$ in the language $\lingua_{\aq }= \lingua_0\cup\{D\}\cup \{a_1, \ldots, a_e\}$ (for $0<e<\infty$) resp. $\lingua= \lingua_0\cup\{D\}$ for $e=0$ or $e=\infty$.

The following proof consists in checking  the first condition of \cite[Definition 3.1]{LM24indep} (derivation like theories) and then following the argument of León Sánchez and Mohamed in \cite[Lemma 3.8]{LM24indep} but without using the terminology of derivation like theories.	

\begin{proof}[Proof of Hypothesis \ref{HypCharAcl}]
	Given $A$ in $\mathcal{K}$, first note that the derivation on $A$ extends uniquely to $A^{sep}=\acl_0(A) \cap \M$ (see Remark \ref{RemarkAclIsSepClosure}). This yields that $\acl_0(A) \cap M$ is an $\lingua$-substructure such that the second part of Hypothesis \ref{HypCharAcl} holds.
	By Remark \ref{RemarkSettingFields} (\ref{ItemZusammenhangDerAcls}) we moreover have $\acl_0(A) \cap M \subset \acl_{\lingua}(A)$. Therefore we  now assume that $A=\acl_0(A) \cap M$. By \cite[Fact 1]{S86Independence},  separability of $M/A$ still holds, i.e. $A$ is still in $\mathcal{K}$. To obtain Hypothesis \ref{HypCharAcl} it  only remains to prove that in this case $\acl_{\lingua}(A)=A$ is $\lingua$-algebraically closed.
	
	Choose an elementary submodel $N$ of $T$ inside $\M$ containing $A$. By definition of $\mathcal{K}$ and since $A=\acl_0(A)\cap M$, the extension $N/A$ of differential fields is regular.
	
	\textbf{Claim 1:} 	We obtain an elementary submodel $N'$ of $\M$ realizing $\qftp_{\lingua}(N/A)$ with $N' \indA_A N$.

	\textit{Proof of Claim 1:} By existence for $\indA$ in the ambient algebraically closed field $\M_0$, we find a field $N'$ such that $N' \indA_A N$ and $\tp_0(N'/A)=\tp_0(N/A)$ (i.e. the fields are isomorphic over $A$).  We now turn $N'$ into a differential field by copying the structure of $N$ onto it. However, the field $N'$ might not be contained in $\M$ and even if it was, not as  a differential subfield. Therefore we now want to realize a copy of $N'$ inside the model $M$.
	
	Regularity yields that $N' \indLd_A N$ and thus $N'\cdot N\cong \mathrm{Frac}(N'\otimes_A N)$ by Fact \ref{FactPropertiesLd}.  In particular, the product $N'\cdot N$ can be equipped in a unique way with a derivation that extends the ones on $N'$ and $N$. By Fact \ref{FactAlgebraicIndependenceSeparability}, we thus get a separable extension $N' \cdot N/N$ of differential fields.  By \cite[Lemma 5.5 and Proposition 5.10]{IS23differentially}, we can embed $N'\cdot N$ in a separably differentially closed differential field $M_1$ which is differentially perfect (for $e=0$), resp. has differential $p$-basis $\aq$ (for $0<e<\infty)$, resp. which is of infinite differential degree of imperfection and a separable extension of $N'\cdot N$ (for $e=\infty$). In each case, we then get $M_1 \models T$ and by the model completeness results \cite[Proposition 5.14 and 5.18]{IS23differentially} the model $M_1$ is an elementary extension of $N$ and thus elementarily embeds into $M$ by saturation yielding the desired copy of $N$.
	\hfill \qed$_{\text{Claim } 1}$

	We deduce that there is an $\lingua$-isomorphism $f: N \to N'$ fixing $A$. As the models $N$ and $N'$ are in $\mathcal{K}$ by Hypothesis \ref{HypDclInK}, the map $f$ is elementary and thus $\acl_{{\lingua}}(A)$ must be fixed setwise by $f$. Hence, every element in $\acl_{{\lingua}}(A)$ belongs to $N'$ and thus $\acl_{{\lingua}}(A) \indA_A \acl_{{\lingua}}(A)$, so $\acl_{{\lingua}}(A) \subset \acl_0(A)\cap M=A$.
\end{proof}
Note that once Claim 1 was proved, no specific properties of differential fields were used.

\end{document}